\newenvironment{keywords}{\footnotesize{\bf Keywords: }}{}
\newenvironment{AMS}{\footnotesize{\bf AMS subject classification: }}{}
\newtheorem{theorem}{Theorem}[section]
\newtheorem{lemma}[theorem]{Lemma}
\newtheorem{proposition}[theorem]{Proposition}
\newtheorem{remark}{Remark}[section]
\numberwithin{equation}{section}
\long\def\symbolfootnote[#1]#2{\begingroup\def\thefootnote{\fnsymbol{footnote}}
\footnote[#1]{#2}\endgroup}
\renewcommand{\paragraph}[1]{}
\renewcommand{\paragraph}[1]{}
\renewcommand{\includegraphics}[1]{\framebox{Graphics Placeholder}}
\author{Assyr Abdulle\thanks{ANMC, Section of Mathematics, Swiss Federal Institute of Technology (EPFL), CH-1015, Lausanne,
Switzerland
        }
        \and
		Ping Lin\thanks{Division of Mathematics, University of Dundee, Old Hawkhill, Dundee, DD1 4HN, UK%
        }
        \and
        Alexander V. Shapeev$^{*,}$\thanks{Present address: 
                School of Mathematics, 206 Church St.~SE, University of Minnesota, Minneapolis,
                MN 55455, US}
}
\title{A priori and a posteriori $W^{1,\infty}$ error analysis of a QC method for complex lattices}
\newcommand{\smfrac}[2]{{\textstyle\frac{#1}{#2}}}
\newcommand{\bbR}{{\mathbb R}}
\newcommand{\bbZ}{{\mathbb Z}}
\newcommand{\calE}{{\mathcal E}}
\newcommand{\calI}{{\mathcal I}}
\newcommand{\calL}{{\mathcal L}}
\newcommand{\calN}{{\mathcal N}}
\newcommand{\calU}{{\mathcal U}}
\newcommand{\calR}{{\mathcal R}}
\newcommand{\calT}{{\mathcal T}}
\newcommand{\calP}{{\mathcal P}}
\newcommand{\eps}{{\epsilon}}
\newcommand{\Const}{{\rm Const}}
\newcommand{\interior}{{\rm int}}
\def\del{\delta\hspace{-1pt}}
\def\ddel{\delta^2\hspace{-1pt}}
\def\transpose{{\!\top}}
\newcommand{\delE}{\del E}
\newcommand{\ddelE}{\ddel E}
\newcommand{\delPhi}{\del\Phi}
\newcommand{\ddelPhi}{\ddel\Phi}
\newcommand{\C}{{\rm C}}
\renewcommand{\c}{{\rm c}}
\newcommand{\aux}{{\rm aux}}
\newcommand{\<}{\langle}
\renewcommand{\>}{\rangle}
\newcommand{\dd}{{\rm d}}
\newcommand{\dx}{\dd x}
\newcommand{\wrt}{w.r.t.\ \relax}
\newcommand{\cf}{cf.\ \relax}
\begin{document}

\allowdisplaybreaks
\maketitle

\begin{abstract}
In this paper we prove a priori and a posteriori error estimates for a multiscale numerical method for computing equilibria of multilattices under an external force.
The error estimates are derived in a $W^{1,\infty}$ norm in one space dimension.
One of the features of our analysis is that we establish an equivalent way of formulating the coarse-grained problem which greatly simplifies derivation of the error bounds (both, a priori and a posteriori).
We illustrate our error estimates with numerical experiments.
\end{abstract}

\begin{keywords}
a priori and a posteriori analysis,
multilattice,
quasicontinuum method,
multiscale method
\end{keywords}

\begin{AMS}
65N30
,    %
70C20
,    %
74G15
,    %
74G65
\end{AMS}

\pagestyle{myheadings}
\thispagestyle{plain}


\section{Introduction}

Multiscale methods for modelling and simulation of microscopic features in crystalline materials have been very attractive  to researchers of material sciences and applied mathematics in past two decades.
In these modelling methods it is assumed that there is an underlying atomistic model which is the ``exact" description of a material associated with certain lattice structure.
Direct atomistic simulations using the ``exact" model may not be feasible because of its huge number of degree of freedoms.
The quasicontinuum (QC) approximation is a popular method to dramatically reduce the degrees of freedom of the underlying atomistic model. It was put forward in \cite{TadmorPhillipsOrtiz1996} for a simple lattice system and in \cite{TadmorSmithBernsteinEtAl1999} for a complex lattice system.
Besides extensive application of the QC approximation in practical material simulations, there have been growing interest in rigorously analyzing the convergence of the QC approximation or the error between the ``correct" and the ``approximate'' solutions, see, e.g., \cite{
	DobsonLuskin2009a,
	DobsonOrtnerShapeev2012,
	EMing2007,
	GunzburgerZhang2010,
	Lin2003,
	Lin2007,
	MakridakisSuli,
	MingYang2009,
	OrtnerShapeev2011,
	OrtnerTheil,
	OrtnerZhang2011,
	VanKotenLiLuskinEtAl2012,
	VankotenOrtner2012},
as well as a number of works attemping to design more accurate coarse-grained algorithms, see, e.g., \cite{LiLuskinOrtner2011, LuskinOrtnerVankoten2011, OrtnerZhang2011, Shapeev2011, Shapeev2012}.
However, most of the works, with the exception of \cite{EMing2007} and \cite{VankotenOrtner2012}, are for crystalline materials with a simple lattice structure.

In this paper we consider a problem of equilibrium of an atomistic crystalline material with a complex lattice structure. The essential step in reducing the degrees of freedom is to coarse-grain the problem. The QC is one of the most efficient methods of coarse-graining the atomistic statics. The idea behind the QC is to introduce a piecewise affine constraints for the atoms in regions with smooth deformation and use the Cauchy-Born rule to define the energy of the corresponding groups of atoms. To formulate the QC method for crystals with complex lattice (for short, complex crystals) one must account for relative shifts of simple lattices which the complex lattice is comprised of \cite{TadmorSmithBernsteinEtAl1999}.
Our approach to model complex lattices is the framework of discrete homogenization, developed in our earlier paper \cite{AbdulleLinShapeev2010}.

We note that the idea of applying homogenization to atomistic media has appeared in the literature \cite{BaumanOdenPrudhomme2009, ChenFish2006, Chung2004, ChungNamburu2003, FishChenLi2007}.
We also note that the method considered in this paper is essentially equivalent to the QC for complex crystals, being put in the framework of numerical homogenization \cite{AbdulleLinShapeev2012}.
However, the rigorous discrete homogenization procedure and related numerical method allow us to derive error estimates for the homogenized QC method, when compared to the
solution of discretely homogenized atomistic equations.
It also allows, by a reconstruction procedure, to approximate the original full atomistic solution. To the best of our knowledge,
such error estimates are new. As in many numerical homogenization
techniques for PDEs, there is no need for our numerical approximation to derive
homogenized potential before-hand, since the effective potential is computed
on the fly (see, e.g., \cite{AEE12, EEL2007}). In addition, we note that the $H^1$ error estimates in our earlier unpublished paper \cite{AbdulleLinShapeev2010} are derived in one dimension for linear nearest neighbour interactions. In this paper we consider fully nonlinear multi-neighbour interactions which are technically much more difficult. Further, we will derive $W^{1,\infty}$ error estimates, which are more suitable for nonlinear interaction and are technically harder than those in the $H^1$ norm, and are rarely obtained even in the simple lattice case (the only estimates in $W^{1,\infty}$ norm that we know of are \cite{Lin2003,OrtnerSuli2008}).
Also, we remark that we establish an equivalence of the coarse-grained homogenized model and the atomistic homogenized model (Lemma \ref{lem:magic}), which significantly simplifies the $W^{1,\infty}$ error analysis of the QC method.
Finally, we derive both a priori and a posteriori error estimates.

The regularity results of this paper are similar to those in \cite{EMing2007}.
The main difference is that our results do not require a very high regularity of the external forces that was assumed in \cite{EMing2007} (where, essentially, the highly smooth external forces were necessary for using inverse inequalities to get a $W^{1,\infty}$ convergence from an $H^1$-stability).

Another related homogenization approach is the $\Gamma$ convergence (see, e.g., \cite{AlicandroCicalese2004, AlicandroCicaleseGloria2011}) which is an excellent technique of finding the effective macroscopic energy from the microscopic interaction law, but does not yield the rates of convergence of the minimizers of the microscopic model and the homogenized model.

The paper is organized as follows.
In Section \ref{sec:method_formulation} we formulate the multiscale method for multilattices and state our main assumptions.
In Section \ref{sec:inf-sup-conditions-and-regularity} we prove the inf-sup condition and regularity for the atomistic and the homogenized
equations.
In Section \ref{sec:analysis_hqc} we prove convergence of the approximate solutions to the exact ones.
Finally, in Section \ref{sec:numeric} we present numerical results that support our analysis.

\section{Method Formulation and Main Results} \label{sec:method_formulation}
In this section after introducing the principal notations used throughout the paper, we recall the 
equations for the equilibria of multilattices and describe our multiscale numerical method.
We then state our main convergence results.
\subsection{Atomistic Displacement and Function Spaces}
We consider an (undeformed) lattice of $N$ atoms, $\calL=\{\eps, 2\eps, \ldots, N\eps\}$, repeated periodically to occupy the entire $\eps\bbZ$.
The positions of an atom $x\in\calL$ in the deformed configuration is $x+u(x)$, where $u=u(x)$ is the displacement.
We will consider only $\eps N$-periodic displacements, i.e., such that $u(x+\eps N) = u(x)$, thus effectively reducing the system to a finite number of degrees of freedom.
For convenience we choose $\eps = \smfrac1N$.
The space of $\eps N$-periodic functions is denoted as
\[
\calU(\calL)= \big\{u: \eps\bbZ\to\bbR: \ u(x) = u(x+\eps N)~\forall x\in \eps\bbZ \big\}.
\]
and its subspace of functions with zero average as
\[
\calU_{\#}(\calL)= \big\{u\in \calU(\calL):~\<u\>_\calL=0\big\},
\]
where the discrete integration (averaging) operator $\<\bullet\>_\calL$ is defined for $u\in \calU(\calL)$
by
\[
 \<u\>_\calL := \frac{1}{N}\sum_{x\in\calL} u(x).
\]
We sometimes also use the notation $\<u(x)\>_{x\in\calL}$ for $\<u\>_\calL$.
Also, for $u,v\in \calU(\calL)$ we define the pointwise product, $uv$, by
\[
uv(x) = u(x) v(x)
\quad \forall x\in \eps\bbZ,
\]
and the scalar product
\[
\<u,v\>_\calL := \<u v\>_\calL = \frac{1}{N}\sum_{x\in\calL} u(x)\, v(x).
\]

We will only consider displacements $u\in\calU_\#(\calL)$ since for more general displacements
$
u(x) = F x + \hat{u}(x),
$
with $F\in\bbR$ and $\hat{u}\in\calU_\#(\calL)$,
we can adsorb $F x$ into the reference positions as $u(x) = (x + F x) + \hat{u}(x)$ and rescale the spatial coordinate as $\hat{x} = x + Fx$.

For $u=u(x)\in \calU(\calL)$ we introduce the $r$-step discrete derivative ($r\in\bbZ,r\neq 0$),
\[
D_{x,r} u(x) := \frac{u(x+r\eps)-u(x)}{r\eps}.
\]
For $r=1$ the forward discrete derivative $D_{x,1} u$ we will sometimes simply be written as $D_x u$.
In addition to differentiation operators,
we also define for $u\in \calU(\calL)$, the translation operator
$T_x u\in \calU(\calL)$,
\[
T_x u(x) := u(x+\eps).
\]
Then the $r$-step translation ($r\in\bbZ$) can be expressed as a power of $T_x$, 
$
T_x^r u(x) = u(x+r\eps).
$
Finally, introduce an averaging operator,
\[
A_{x,r} := \frac1r \sum_{k=0}^{r-1} T_x^k,
\quad(r\in\bbZ,~r>0)
\]
so that we can write $D_{x,r} = A_{x,r} D_x$ ($r>0$).

On the function space $\calU(\calL)$ we define the family of norms
\[
\|u\|_q := \big(\<|u|^q\>_\calL\big)^{1/q}
\quad(1\leq q<\infty),
\qquad\text{and}\quad
\|u\|_\infty := \max_{x\in\calL} |u(x)|,
\]
and seminorms
\[
|u|_{m,q} := \|D^m u\|_q
\quad(1\leq q\leq \infty, ~m\in\bbZ,~m\geq 0).
\]
The seminorms $|u|_{m,q}$ are extended for negative $m$ as
\begin{align*}
|u|_{m,q} :=~&
\sup \big\{\<u,v\>_\calL \,:~ v\in\calU_\#(\calL), |v|_{-m,q'}=1 \big\}
\\~& \quad (1\leq q\leq \infty,~ (q')^2+q^2=1,~ m\in\bbZ,~ m<0).
\end{align*}
Note that $|u|_{m,q}$ are proper norms in $\calU_\#(\calL)$ for all $m\in\bbZ$.
Hence we denote spaces $\calU(\calL)$ and $\calU_\#(\calL)$, equipped with the respective norms, as $\calU^{0,q}(\calL)$ and $\calU^{m,q}_\#(\calL)$.

We will also work with the lattice $\calP=\{1,2,\ldots,p\}$.
For lattice functions $\eta=\eta(y)\in\calU(\calP)$ we define the operators ($D_y$, $D_{y,r}$, $T_y$, and $A_{y,r}$) and the norms similarly to functions in $\calU(\calP)$, noting that the lattice spacing of $\calP$ is $1$ whereas the lattice spacing of $\calL$ is $\eps$.


For functions of two variables, $v=v(x,y)\in \calU(\calL)\otimes\calU(\calP)$, we will denote the full derivatives, translation, and averaging, by
$T := T_x T_y$, $D_r := \smfrac1{r \eps} (T^r-I)$, $D := D_1$, $A_r := \frac1r \sum_{k=0}^{r-1} T^k$.
Notice that the variables $x$ and $y$ are not symmetric in the definition of derivatives.
If a function does not depend on $y$ then the full derivatives coincide with the derivatives in $x$ (likewise for translation and averaging).
Hence, for functions of $x$ only, we will often omit the subscript $x$ in the operators $D_x$, $T_x$, $A_{x,r}$.

The following lemma, whose proof is straightforward and will be omitted, collect the useful facts about the above operators
\begin{lemma}\label{lem:operator-facts}
(a) 
For any $v\in\calU(\calL)$, $r\in\bbZ$, $r>0$
the following estimates hold:
\begin{align}
\label{eq:D_r_estimate}
\|D_r v\|_q = \|A_r D v\|_q \leq~& \|D v\|_q
,
\\ \label{eq:Dr-D_estimate}
\|D_r v - D v\|_q = \frac{\eps}r \bigg\|\sum_{k=1}^{r-1} k D_k D v\bigg\|_q \leq~& \smfrac12 \eps (r-1)\, \|D^2 v\|_q
.
\end{align}

(b) For any $v\in\calU(\calL)\otimes\calU(\calP)$, $r\in\bbZ$, $r>0$ the following estimate holds:
\begin{equation} \label{eq:Ar_Ayr_estimate}
\|A_r v - A_{y,r} v\|_q 
=
\frac{\eps}r \bigg\|
	\sum_{k=1}^{r-1} k D_{x,k} T_y^k v
\bigg\|
\leq \smfrac12 \eps (r-1)\, \|D_x v\|_q
.
\end{equation}
%
\end{lemma}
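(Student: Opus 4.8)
The plan is to prove all three estimates by reducing each difference operator to a telescoping sum and then applying the triangle inequality together with the averaging bound. The governing identity in each case is the factorization of a multi-step operator as a product of an averaging operator and a single-step difference, so I would first record the two elementary facts I expect to use repeatedly: that the averaging operator $A_r$ (and likewise $A_{y,r}$) is a convex combination of translations $T^k$, each of which is an isometry in every $\|\cdot\|_q$ norm, so that $\|A_r v\|_q \leq \|v\|_q$; and that a single-step difference of $v$ telescopes, $D_r = A_r D$, which is exactly the factorization already stated in the text.

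For part (a), estimate \eqref{eq:D_r_estimate} is then immediate: writing $D_r v = A_r D v$ and using $\|A_r w\|_q \leq \|w\|_q$ with $w = Dv$ gives the claim with no further work. For \eqref{eq:Dr-D_estimate} I would start from the identity $D_r - D = A_r D - D = (A_r - I)D$, and then expand $A_r - I = \frac1r \sum_{k=0}^{r-1}(T^k - I)$. The key algebraic step is to rewrite each $T^k - I$ as a telescoping sum over single translations and re-index, which produces the stated expression $\frac{\eps}{r}\sum_{k=1}^{r-1} k\, D_k\, D v$; concretely, $(T^k - I)D v = k\eps\, D_k D v$ by definition of $D_k$, and collecting the weights over $k=0,\dots,r-1$ yields the coefficient $k$ inside the sum after reindexing. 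Finally I would bound $\|D_k Dv\|_q = \|A_k D^2 v\|_q \leq \|D^2 v\|_q$ (again by the averaging bound applied to $D^2 v$), so that $\frac{\eps}{r}\sum_{k=1}^{r-1} k\,\|D_k D v\|_q \leq \frac{\eps}{r}\|D^2 v\|_q \sum_{k=1}^{r-1} k = \frac{\eps}{r}\cdot\frac{r(r-1)}{2}\|D^2 v\|_q = \smfrac12 \eps(r-1)\|D^2 v\|_q$, which is precisely the right-hand side.

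For part (b), the structure is the same but now the two averaging operators differ in whether the $y$-translation is included: $A_r = \frac1r\sum_k (T_xT_y)^k$ whereas $A_{y,r} = \frac1r\sum_k T_y^k$. I would compute $A_r - A_{y,r} = \frac1r\sum_{k=0}^{r-1} (T_x^k - I)T_y^k$ (factoring $T_y^k$ out of each term), and then expand $T_x^k - I$ by the same telescoping/reindexing as in part (a), this time with $(T_x^k - I) = k\eps\, D_{x,k}$, to obtain the stated identity $\frac{\eps}{r}\sum_{k=1}^{r-1} k\, D_{x,k} T_y^k v$. The final bound uses $\|D_{x,k} T_y^k v\|_q \leq \|D_x v\|_q$ — here $D_{x,k} = A_{x,k} D_x$ commutes with the isometry $T_y^k$ and is controlled by the averaging bound in the $x$-variable — followed by the same arithmetic $\sum_{k=1}^{r-1} k = \frac{r(r-1)}{2}$.

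I expect the only genuinely delicate point to be the reindexing that converts $\sum_{k=0}^{r-1}(T^k - I)$ into the weighted sum $\sum_{k=1}^{r-1} k\, D_k$; one must be careful that the definition $D_k v = (T^k v - v)/(k\eps)$ reproduces the stated coefficients exactly, and that the convexity argument for $\|A_r\|\leq 1$ is applied to the correct argument ($D^2 v$, not $Dv$) in \eqref{eq:Dr-D_estimate}. Everything else is routine, which is consistent with the authors' decision to omit the proof.
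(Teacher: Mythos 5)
Your proof is correct and is exactly the ``straightforward'' argument the paper has in mind when it omits the proof: the operator identities $D_r=A_rD$, $T^k-I=k\eps D_k$ (resp.\ $T_x^k-I=k\eps D_{x,k}$), and the fact that $A_r$ and $A_{y,r}$ are convex combinations of translation isometries reproduce precisely the middle expressions displayed in \eqref{eq:Dr-D_estimate} and \eqref{eq:Ar_Ayr_estimate}, after which the bound $\sum_{k=1}^{r-1}k=\smfrac12 r(r-1)$ finishes each estimate. The only slip is presentational: no telescoping or reindexing is actually needed, since $T^k-I=k\eps D_k$ already carries the coefficient $k$, as your own concrete formula shows.
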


\subsection{Atomistic Interaction and Equilibrium}

The energy of interaction of two atoms, $x\in\calL$ and $x+\eps r\in\calL$ depends on three variables: the distance $u(x+r \eps)-u(x)$ between atoms $x$ and $x+\eps r$, and their positions in the reference configuration that are needed to account for different species of atoms.
We denote such energy using a family of functions $\Phi^\eps_r(D_{x,r} u(x); x)$, where, for a fixed $r\in\bbZ^+$, $\Phi^\eps_r$ is defined on (a subset of) $\bbR\times\calL$.
The total interaction energy of the atomistic system is thus
\begin{equation}\label{eq:E_def}
E(u)
=
\bigg\<\sum_{r=1}^R \Phi^\eps_r(D_r u)\bigg\>_\calL
\quad=~
\bigg\<\sum_{r=1}^R \Phi^\eps_r(D_{x,r} u(x); x)\bigg\>_{x\in\calL}
,
\end{equation}
where $R$ is effectively the interaction radius (measured in the reference configuration).


The equations of equilibrium are thus
\begin{equation}
\label{eq:equilibrium}
\text{find $u\in\calU_\#$ s.t.:}\qquad
\<\delE(u), v\>_\calL := \frac{\dd}{\dd t} E(u+tv)\big|_{t=0} = \<f, v\>_\calL
\quad \forall v\in\calU_\#(\calL)
,
\end{equation}
where $f\in\calU_\#(\calL)$ is an external force.
Here $\delE:\calU(\calL) \to \calU(\calL)$ is the Gateaux derivative of $E:\calU(\calL)\to\bbR$.

\subsection{Multilattice and Homogenization}

The atoms $\calL$ are assumed to be of $p$ different species located periodically on $\calL$, and we assume that $N\in p\bbZ$.
We index the atom species with $\calP=\{1,2,\ldots, p\}$.
Note that a lattice functions $\eta=\eta(y)\in\calU(\calP)$ can be related to a lattice function $\eta^\eps=\eta(x/\eps) \in \calU(\calL)$.

We define $\Phi_r$ on an open subset of $\bbR\times\calP$ as $\Phi_r\big(\bullet; y\big) := \Phi^\eps_r\big(\bullet; \eps y\big)$ for a fixed $r$.
Due to periodicity of the microstructure, the dependence of $\Phi_r$ on $y$ is assumed to be $p$-periodic, i.e. $\Phi_r(z; \bullet)\in\calU(\calP)$ for all $z$.
For convenience of notations (e.g., in \eqref{eq:E_def} or \eqref{eq:Phi0}), we further identify, for a fixed $r$, the family of $p$ scalar functions $\Phi_r(\bullet; y)$, $y\in\calP$, with the function $\Phi_r : \calU(\calP)\supset U\to\calU(\calP)$ by identifying $\Phi_r(w(y), y)$ with $[\Phi_r(w)](y)$. (Here $\calU(\calP)\supset U\to\calU(\calP)$ denotes a function from an open subset $U$ of $\calU(\calP)$ with values in $\calU(\calP)$.)

We apply a homogenization to the atomistic energy to average out the microstructure; more precisely, to average out the dependence on $y\in\calP$.
The homogenized interaction (see \cite{AbdulleLinShapeev2010} for the details) is defined by
\begin{equation}
\label{eq:Phi0}
\Phi^0(z)
:=
\sum_{r=1}^R \<\Phi_r(z+ D_{y,r} \chi(z; y); y)\>_{y\in\calP}
\quad=~
\sum_{r=1}^R \<\Phi_r(z+ D_{y,r} \chi(z))\>_\calP
,
\end{equation}
where for a fixed $z\in\bbR$, $\chi(z)\in\calU_\#(\calP)$ solves the micro problem
\begin{equation}
\label{eq:chi}
\sum_{r=1}^R \<\delPhi_r(z+D_{y,r} \chi(z)), D_{y,r} \eta\>_\calP =0
\quad \forall \eta\in\calU_\#(\calP),
\end{equation}
and $\delPhi_r(z;y) = \smfrac\dd{\dd z}\Phi_r(z;y)$.

The homogenized interaction energy is $\int_{0}^1 \Phi^0(\smfrac\dd\dx u^0) \dd x$, whose discretized version is
$
E^0(u^0) := \< \Phi^0(D_x u^0) \>_{\calL}.
$
This leads to the homogenized equilibrium equations of the form
\begin{equation}
\label{eq:homogenized}
\text{find $u^0\in\calU_\#$ s.t.:}\qquad
\<\delPhi^0(D_x u^0), D_x v\>_\calL = \<f, v\>_\calL
\quad \forall v\in\calU_\#(\calL)
,
\end{equation}
or, written in a strong form,
\begin{equation}\label{eq:homogenized_strong}
\text{find $u^0\in\calU_\#$ s.t.:}\qquad
-D_x[\delPhi^0(D_x u^0)]=T_x f,
\end{equation}
where $D_x := D_{x,1}$.
To derive \eqref{eq:homogenized_strong} we should use $D_x^\transpose = -T_x^{-1} D_x$.

To extract the microstructure from the homogenized solution $u^0$, define the corrector
\begin{align}\label{eq:corr}
u^\c(x) :=~& \calI_\# \big(u^0(x) + \eps \chi^\eps(D_x u^0(x); x)\big),
\quad \text{where}
\\
\label{eq:Ih_def}
\calI_\# u :=~& u - \<u\>_\calL
\end{align}
and 
$
\chi^\eps(z; x) := \chi\big(z; \smfrac x\eps\big).
$
Application of $\calI_\#$ in the definition of $u^\c(x)$ is done for convenience so that $u^\c\in\calU_\#(\calL)$.

\subsection{HQC Formulation}
Define a triangulation of the region $(0,1]$ by introducing the nodes of triangulation $\calN_h\subset\calL$ and the elements $\calT_h$.
Each element $T\in\calT_h$ is defined by two nodes $\xi,\eta\in\calN_h$ as $T=\calL\cap[\xi,\eta)$, its {\it interior} is defined as $\interior(T) = \calL\cap(\xi,\eta)$, and its size as $h_T = \eta-\xi$.
We also define the element size function, $h\in\calU(\calL)$, so that
\begin{equation}\label{eq:h_def}
h(x) = h_T
\quad \forall x\in T.
\end{equation}


We consider the coarse-grained spaces $\calU_h(\calL) \subset \calU(\calL)$ and $\calU_{h,\#}(\calL) \subset \calU_\#(\calL)$ of piecewise affine functions.
The space $\calU_h(\calL)$ can be characterized by
\begin{equation} \label{eq:Uh_characterization}
u\in\calU_h(\calL)
\qquad\Longleftrightarrow\qquad
D u(\xi-\eps)=D u(\xi) \quad\forall \xi\in\calL\setminus\calN_h
\end{equation}

We denote the nodal basis function of $\calU(\calL)$ associated with $\xi\in\calL$ as $w_\xi$, $w_\xi(x) := \delta_{x-\xi}$, where $\delta$ is the Kronecker delta.
The nodal basis function of $\calU_h(\calL)$ associated with $\xi\in\calN_h$ is denoted as $w^h_\xi(x)$.
The functions $w_\xi$, $\xi\in\calL\setminus\calN_h$, together with $w^h_\xi$, $\xi\in\calN_h$, form a basis of $\calU(\calL)$. Denote the nodal interpolant $\calI_h:\calU(\calL)\to \calU_h(\calL)$.

The HQC approximation to the exact atomistic problem \eqref{eq:equilibrium} is
\begin{equation}
\label{eq:coarse_equation}
\text{find $u^0_h\in\calU_{h,\#}$ s.t.:}\qquad
\<\delE^0(u^0_h), v_h\>_\calL = \<F^{h}, v_h\>_h
\quad \forall v_h \in\calU_{h,\#}^{1,1}(\calL),
\end{equation}
where $\<\bullet, \bullet\>_h$ denotes the duality pairing of $(\calU_{h,\#}^{1,1}(\calL))^*$ and $\calU_{h,\#}^{1,1}$, and $F^h \in \big(\calU_{h,\#}^{1,1}(\calL)\big)^*$ is a numerical approximation to $f\in\calU_\#^{-1,\infty}(\calL)$.
For convenience we extend $F^{h}$ on $\big(\calU_h^{1,1}(\calL)\big)^*$ by requiring $\<F^{h}, 1\>_h=0$, so that $\<F^{h}, \calI_\# v_h\>_h = \<F^{h}, v_h\>_h$ for all $v_h\in\calU_h$ (refer to \eqref{eq:Ih_def} for the definition of $\calI_\#$). A numerical corrector similar to \eqref{eq:corr} can be introduced as follows
\begin{equation}
\label{eq:corr_num}
u^\c_h := \calI_\# \big(u^0_h + \eps\chi^\eps(D u^0_h)\big).
\end{equation}

\subsection{Main results}\label{Main}
Before stating the main results, we introduce some additional notations.
For a Banach space $X$ denote $B_x(x_0,\rho) = \{x\in X \,:~ \|x-x_0\|<\rho\}$---a ball centered at $x_0$ with the radius $\rho$---and call it the neighborhood of $x_0$ with radius $\rho$.
For a mapping $f:U\to Z$ from an open subset $U\subset X$, $\del_x f(x_0)$ is its variational derivative at a point $x_0$.
When it causes no confusion, we may just write $\del f(x_0)$.
If $f:X\to \bbR$ with $X$ being a Hilbert space with the scalar product $\<\bullet, \bullet\>_X$, we identify $\del f(x_0)$ with an element of $X$ and write $\del f(x_0)x = \<\del f(x_0), x\>_X$; likewise the second derivative $\ddel f(x_0)$ will be identified with a linear mapping $X\to X$: $(\ddel f(x_0)x)x' = \<\ddel f(x_0)x, x'\>_X$.
The space of continuous mappings $f:U\to Z$, $U$ being bounded, will be denoted as $\C(U; Z)$ with the norm $\|f\|_\C:=\sup_{x\in U} \|f(x)\|$.
The space of functions whose $k$-th derivative is continuous will be denoted as $\C^k(U; Z)$ with a seminorm $|f|_{\C^k} := \|\delta^k f\|_\C$.
A space of mappings whose $k$-th derivative ($k\geq 0$) is Lipschitz continuous will be denoted as $\C^{k,1}(U; Z)$ and the smallest Lipschitz constant of the $k$-th derivative will be denoted as $|\bullet|_{\C^{k,1}}$.
In our analysis we will often use the fact that if $f\in\C^{k+1}$ then $|f|_{\C^{k,1}}=|f|_{\C^{k+1}}$.
In what follows we will express the statement ``The quantity $f$ is bounded by a constant that may depend on 
$f_1, \ldots, f_k$'' as $f\leq \Const\big(f_1, \ldots, f_k\big)$.

We make the following assumptions that will allow us to apply the framework of the implicit function theorem (refer to Appendix \ref{sec:appendix} for its precise statement).

\subsubsection*{Assumptions}
We assume that there exists a microstructure $\chi_*=\chi_*(y)\,\in\calU_\#(\calP)$ and $\rho_\Phi$ such that:
\begin{itemize}
\item[{\bf 0.}]
	The micro-deformation $y + \chi_*(y)$ is a strictly increasing function of $y\in\bbZ$.
	This simply expresses the fact that the atoms in the reference configuration are sorted by increasing position $\eps (y + \chi_*(y))$.
\item[{\bf 1.}]
	For each $r\in\calR$ and $y\in\calP$, the interaction potential $\Phi_r(\bullet,y)$ is defined in a neighborhood $U(y)\subset\bbR$ of $D_{y,r} \chi_*(y)$ of radius $\rho_\Phi$ and $\Phi_r(\bullet,y)\in \C^{2,1}(U(y); \bbR)$.
	\item[{\bf 2.}]
		$\chi_*$ satisfies
		\[
			\sum_{r=1}^R \<\delPhi_r(D_{y,r} \chi_*), D_{y,r} \eta\>_\calP =0
			\quad \forall \eta\in\calU_\#(\calP).
		\]
This assumption ensures that $\eps \chi_*(\smfrac{x}{\eps})$ is a solution to \eqref{eq:equilibrium} with $f=0$.
	\item[{\bf 3.}]
	Nearest neighbor interaction dominate:
	\begin{equation}
	\label{eq:nn_dominance}
	\smfrac12 \min_y \ddelPhi_1(D_{y,1} \chi_*(y); y)-\sum_{r=2}^R \max_y |\ddelPhi_r(D_{y,r} \chi_*(y); y)| > 0.
	\end{equation}	
\end{itemize}
%
\begin{remark}[An alternative formulation of Assumption 1]
	It is useful to note the following equivalent formulation of Assumption 1 (the equivalence can be established by a straightforward calculation):
	for each $r\in\calR$ the function $\Phi_r: \calU^{0,\infty}(\calP)\supset U \to \calU^{0,\infty}(\calP)$ is defined in a neighborhood $U$ of $\chi_*\in\calU^{0,\infty}(\calP))$ with radius $\rho_\Phi$, and $\Phi_r\in \C^{2,1}(U; \calU^{0,\infty}(\calP))$.
\end{remark}

We next state our main results.
We start with the a posteriori result.

\begin{theorem}[a posteriori estimate]\label{th:aposteriori}
Assume that the Assumptions 0,1,2,3 hold.
For all $F^h\in B_{(\calU_{h,\#}^{1,1})^*}(0,\rho_f)$, the solution $u_h^0$ to \eqref{eq:coarse_equation} exists and is unique in $B_{\calU_\#^{1,\infty}}(\chi^\eps_*,\rho_u)$.
Moreover, the following a posteriori estimate holds:
\begin{align} \notag
 |u^\c_h-u|_{1,\infty}
\leq~& \label{eq:posteriori}
\Const\big(c_0^{-1} C_\Phi^{(1,1)}\big) \max_{x\in\calN_h} |D u^0_h(x)-D u^0_h(x-\eps)|
\\~&+
c_0^{-1}  \|(h-\eps) f\|_{\infty}
+ \max_{ \substack{v_h\in\calU_{h,\#}(\calL), \\ |v_h|_{1,1}=1}} |\<F^h, v_h\>_h - \<f, v_h\>_\calL|
.
\end{align}
Here $C_\Phi^{(1,1)}
:=
\max_{y\in\calP} \sum_{r\in\calR} |r \delta \Phi_r(\bullet, y)|_{\C^{0,1}}$.
\end{theorem}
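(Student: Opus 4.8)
The plan is to obtain the a posteriori bound as a consequence of a \emph{residual estimate} combined with the quantitative implicit function theorem of Appendix~\ref{sec:appendix}. The exact atomistic solution $u$ satisfies $\<\delE(u),v\>_\calL=\<f,v\>_\calL$, so I regard $F(w):=\delE(w)-f$ as a map on $\calU_\#^{1,\infty}(\calL)$ whose zero is $u$, and I treat the reconstruction $u^\c_h$ from \eqref{eq:corr_num} as an approximate zero. Existence and uniqueness of $u^0_h$ in $B_{\calU_\#^{1,\infty}}(\chi^\eps_*,\rho_u)$ (hence of $u^\c_h$) follow by applying the same theorem to the coarse map $w_h\mapsto\delE^0(w_h)-F^h$, once the linearisation of $\delE^0$ at $\chi^\eps_*$ is shown to be stable with the inf-sup constant $c_0$ of Section~\ref{sec:inf-sup-conditions-and-regularity}; the radii $\rho_f,\rho_u$ are the quantitative thresholds the theorem produces from $c_0$, $\rho_\Phi$ and the Lipschitz data entering $C_\Phi^{(1,1)}$. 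Granting this, the theorem reduces to estimating the residual $\|F(u^\c_h)\|_{-1,\infty}=\sup\{\<\delE(u^\c_h)-f,v\>_\calL:\ v\in\calU_\#(\calL),\ |v|_{1,1}\le 1\}$ and converting it into an error bound via (a constant depending on) $c_0^{-1}$; the nonlinear correction in the implicit function theorem is what upgrades the bare $c_0^{-1}$ to $\Const(c_0^{-1}C_\Phi^{(1,1)})$ on the leading term.

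The core is the decomposition of the atomistic residual $\<\delE(u^\c_h)-f,v\>_\calL$, where I split $v=\calI_h v+(v-\calI_h v)$. For the coarse component I invoke Lemma~\ref{lem:magic}, the equivalence of the coarse-grained homogenised model and the atomistic reconstruction: it identifies $\<\delE(u^\c_h),\calI_h v\>_\calL$ with $\<\delE^0(u^0_h),\calI_h v\>_\calL$ up to controllable terms, so that the coarse equation \eqref{eq:coarse_equation} cancels everything except the force mismatch $\<F^h,\calI_h v\>_h-\<f,\calI_h v\>_\calL$ and the quadrature defect between the exact pairing $\<f,v\>_\calL$ and its coarse nodal surrogate. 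The fine component $v-\calI_h v$ vanishes at the nodes $\calN_h$ and is supported elementwise; on the interior of each element $Du^0_h$ is constant, so $\eps\chi^\eps(Du^0_h)$ is exactly the periodic corrector and the micro-problem \eqref{eq:chi} forces the atomistic equilibrium to balance in the bulk. Hence the fine part of the residual collects only the boundary layers at the element interfaces, where $Du^0_h$ jumps, together with the discrepancy between the discrete derivatives $D_r$ and the homogenised averaging, which is handled by \eqref{eq:Dr-D_estimate} and \eqref{eq:Ar_Ayr_estimate}.

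Bounding the three contributions is then essentially mechanical. The interface terms are estimated by Taylor-expanding each $\delPhi_r$ about the local deformation gradient and using the Lipschitz constant of $r\,\del\Phi_r$, which produces $C_\Phi^{(1,1)}\max_{x\in\calN_h}|Du^0_h(x)-Du^0_h(x-\eps)|$; by the characterisation \eqref{eq:Uh_characterization} the jump $|Du^0_h(x)-Du^0_h(x-\eps)|$ is nonzero only at nodes. The quadrature defect, coming from replacing exact integration of $f$ against $v$ by the coarse rule on elements of size $h$, is controlled by Lemma~\ref{lem:operator-facts}; the factor $h-\eps$ appears because the defect scales with the element width relative to the atomistic spacing and vanishes when $h=\eps$, giving $c_0^{-1}\|(h-\eps)f\|_\infty$. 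The force-approximation term is left in its computable form $\max_{v_h,\,|v_h|_{1,1}=1}|\<F^h,v_h\>_h-\<f,v_h\>_\calL|$, the extension $\<F^h,1\>_h=0$ permitting the replacement of $v_h$ by $\calI_\# v_h$ without change. Summing and applying the implicit function theorem delivers \eqref{eq:posteriori}, the differing prefactors reflecting where the stability constant and the nonlinear Lipschitz data enter each contribution.

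The step I expect to be the genuine obstacle is the $W^{1,\infty}$ residual localisation of the second paragraph: showing that the forces generated by the reconstructed $u^\c_h$ cancel in the element interiors up to the claimed order requires expanding the multi-neighbour, nonlinear potentials $\Phi_r$ around the piecewise-constant gradient and exploiting the orthogonality in \eqref{eq:chi} to annihilate the bulk contribution, all in the max-norm rather than in $L^2$. The accompanying difficulty is verifying that the stability constant $c_0$ of the linearised atomistic operator is genuinely $\eps$- and $h$-independent in $W^{1,\infty}$ (this is the content of Section~\ref{sec:inf-sup-conditions-and-regularity} and the reason the $W^{1,\infty}$ analysis is harder than the $H^1$ one), since the implicit function theorem and every $c_0^{-1}$ prefactor rest on it; Assumption~3, the nearest-neighbour dominance \eqref{eq:nn_dominance}, is precisely what guarantees this.
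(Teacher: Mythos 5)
Your overall residual-based route (bound $|u^\c_h-u|_{1,\infty}$ by $c_0^{-1}$ times the atomistic residual of $u^\c_h$, then split the test function as $v=\calI_h v+(v-\calI_h v)$) is viable and genuinely different from the paper's argument, but as written it has a concrete gap at the very first step: existence and uniqueness of $u^0_h$. You propose to apply the implicit function theorem directly to the coarse map $w_h\mapsto\delE^0(w_h)-F^h$ on $\calU_{h,\#}$, asserting that its linearisation ``is stable with the inf-sup constant $c_0$ of Section~\ref{sec:inf-sup-conditions-and-regularity}.'' But the inf-sup conditions proved there, \eqref{eq:micro_stability}--\eqref{eq:infsup_homogenized}, are for the full pair $\big(\calU^{1,\infty}_\#(\calL),\calU^{1,1}_\#(\calL)\big)$, and an inf-sup condition does not automatically restrict to a subspace pair, because the supremum is then taken over a smaller set of test functions: for instance, the test function built in Lemma~\ref{lem:sup_norm} has $Dv_*$ supported on two lattice points and is not piecewise affine, so it is not admissible for the coarse problem. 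A discrete inf-sup for $\big(\calU^{1,\infty}_{h,\#},\calU^{1,1}_{h,\#}\big)$ can be proved in one dimension, but it is a missing step that neither you nor the paper supplies. (Also, the coarse/homogenized map should be linearised at $u^0=0$, where \eqref{eq:infsup_homogenized} applies, not at $\chi^\eps_*$, where $D\chi^\eps_*=O(\eps^{-1})$ is far outside the domain of $\Phi^0$.)

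This gap is tied to your misreading of Lemma~\ref{lem:magic}, whose actual content is exactly what would close it. The lemma does not identify $\<\delE(u^\c_h),\calI_h v\>_\calL$ with $\<\delE^0(u^0_h),\calI_h v\>_\calL$ ``up to controllable terms'' --- that comparison is the consistency estimate, proved inside Theorem~\ref{th:macroproblem}(c) (see \eqref{eq:ucu_estimate}) and valid for any $u^0$ regardless of what equation it solves. What Lemma~\ref{lem:magic} actually says is that the coarse problem \eqref{eq:coarse_equation} is \emph{equivalent} to the full-lattice homogenized problem with modified force $\calI_h^*F^h$, because convexity of $\Phi^0$ forces $Du$ to be elementwise constant at unloaded interior atoms. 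This is how the paper obtains existence, uniqueness and the regularity bound \eqref{eq:u_0_h_estimate} for $u^0_h$ with no coarse-space IFT at all, and it then proceeds differently from you for the error itself: it introduces the auxiliary atomistic problem \eqref{eq:aux_equation} with the same force $\calI_h^*F^h$, bounds $|u^\c_h-u^\aux|_{1,\infty}$ by the already-proven homogenization estimate \eqref{eq:uhc_u_aux_estimate} (so your ``genuine obstacle,'' the max-norm localisation, is reused rather than redone), and bounds $|u^\aux-u|_{1,\infty}$ by Lipschitz dependence of the atomistic solution on the force. If you prefer to keep your direct residual splitting, note that the boundary-layer analysis of the fine part can be bypassed entirely: since $\delPhi^0(Du^0_h)$ is elementwise constant and $v-\calI_h v$ vanishes at the nodes, one has $\<\delE^0(u^0_h),v-\calI_h v\>_\calL=0$ exactly, so the residual reduces to the consistency term $\<\delE(u^\c_h)-\delE^0(u^0_h),v\>_\calL$ plus the force-mismatch and quadrature terms --- but you still need Lemma~\ref{lem:magic}, correctly stated, or a discrete inf-sup argument, before $u^0_h$ exists to be reconstructed.
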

Note that the a posteriori error estimate has a form similar to the standard FEM estimates: there is a term based on the jumps of the solution across boundaries of elements, a term consisting of summation of the external force in the interior of elements, and a term accounting for an approximate summation of the external force.
It is worthwhile to note that for the fully refined mesh (i.e., where $h=\eps$), the term $ \|(h-\eps) f\|_{\infty}$ vanishes.

The following a priori error estimate will also be shown.

\begin{theorem}[a priori estimate]\label{th:apriori}
In addition to the Assumptions 0,1,2,3, assume that  
exact summation of the external force, i.e., that $\<F^h,v_h\>_h = \<f, v_h\>_\calL$.
Then, for all $f\in B_{\calU_\#^{-1,\infty}}(0,\rho_f)$,
the solution $u_h^0$ to \eqref{eq:coarse_equation} with the exact summation of the external force $\<F^h,v_h\>_h := \<f, v\>_\calL$ exists and is unique in $B_{\calU_\#^{1,\infty}}(\chi^\eps_*,\rho_u)$.
Moreover, the following a priori estimate holds:
\begin{align*}
 |u^\c_h-u|_{1,\infty}
\leq~&
\Const\big(c_0, C_\Phi^{(1,1)}\big)  \|h f\|_{\infty}
.
\end{align*}
\end{theorem}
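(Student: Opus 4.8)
The plan is to derive the a priori bound directly from the a posteriori estimate \eqref{eq:posteriori} of Theorem \ref{th:aposteriori}, so the first task is to verify its hypotheses hold under exact summation. Since $\langle F^h, v_h\rangle_h = \langle f, v_h\rangle_\calL$, duality gives $\|F^h\|_{(\calU_{h,\#}^{1,1})^*} = \sup_{|v_h|_{1,1}=1}|\langle f, v_h\rangle_\calL| \leq |f|_{-1,\infty} < \rho_f$, so $F^h$ lies in the ball required by Theorem \ref{th:aposteriori}; this already yields existence and uniqueness of $u_h^0$ in $B_{\calU_\#^{1,\infty}}(\chi^\eps_*, \rho_u)$. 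In \eqref{eq:posteriori} the force-approximation term vanishes identically by exact summation, and the interior term satisfies $\|(h-\eps)f\|_\infty \leq \|hf\|_\infty$ because $\eps \leq h$ pointwise (every element spans at least one lattice spacing). Thus everything reduces to controlling the jump term $\max_{x\in\calN_h}|D u_h^0(x) - D u_h^0(x-\eps)|$ by $\|hf\|_\infty$, which is the heart of the argument.

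To estimate the jumps I would test the coarse equation \eqref{eq:coarse_equation} with the nodal hat function $w_\xi^h$ at an interior node $\xi\in\calN_h$. Both sides are unchanged when a constant is added to the test function (the left-hand side pairs against $D w_\xi^h$, and $\langle F^h, 1\rangle_h = 0$), so this is legitimate even though $w_\xi^h$ need not have zero average. Since $u_h^0$ is piecewise affine, $\delPhi^0(D u_h^0)$ is constant on each element, and a short computation with the forward-difference operators shows the left-hand side telescopes to the jump of the discrete flux across $\xi$, namely $\delPhi^0(s_L) - \delPhi^0(s_R)$, where $s_L$ and $s_R$ are the slopes of $u_h^0$ on the two elements adjacent to $\xi$ and equal $D u_h^0(\xi-\eps)$ and $D u_h^0(\xi)$ respectively. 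Because $u_h^0$ stays in $B_{\calU_\#^{1,\infty}}(\chi^\eps_*, \rho_u)$, both slopes lie in the region where $\ddelPhi^0 \geq c_0 > 0$ (the coercivity of the homogenized potential established in Section \ref{sec:inf-sup-conditions-and-regularity} from Assumption 3), so the mean value theorem gives $c_0|s_L - s_R| \leq |\delPhi^0(s_L) - \delPhi^0(s_R)| = |\langle f, w_\xi^h\rangle_\calL|$. Hence the jump $|D u_h^0(\xi) - D u_h^0(\xi-\eps)|$ is bounded by $c_0^{-1}|\langle f, w_\xi^h\rangle_\calL|$.

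It then remains to show $|\langle f, w_\xi^h\rangle_\calL| \leq C\|hf\|_\infty$ with an absolute constant $C$. Writing $f(x) = (h(x) f(x))/h(x)$ and using $|h f| \leq \|hf\|_\infty$ pointwise, I obtain $|\langle f, w_\xi^h\rangle_\calL| \leq \|hf\|_\infty\,\eps\sum_x w_\xi^h(x)/h(x)$. On the support of $w_\xi^h$ the size function $h$ takes only the two values $h_L, h_R$ of the adjacent elements, and the discrete integral of $w_\xi^h$ over each element equals its size up to an $O(\eps)$ correction; consequently $\eps\sum_x w_\xi^h(x)/h(x)$ is bounded by an absolute constant. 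Combining the three contributions in \eqref{eq:posteriori} yields $|u_h^\c - u|_{1,\infty} \leq \Const(c_0, C_\Phi^{(1,1)})\|hf\|_\infty$, as claimed. The main obstacle is the jump estimate of the previous paragraph: the telescoping flux identity must be handled carefully with the forward differences (watching the node bookkeeping), and one must invoke the uniform coercivity $\ddelPhi^0 \geq c_0$ together with the fact that the existence result keeps $D u_h^0$ inside the admissible neighborhood, so that the mean value theorem applies along the whole segment between $s_L$ and $s_R$; by contrast the weighted $\|hf\|_\infty$ bound is geometrically routine.
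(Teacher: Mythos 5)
Your proof is correct, and it follows the paper's overall strategy: both deduce the a priori bound from the a posteriori estimate \eqref{eq:posteriori}, noting that the force-approximation term vanishes under exact summation and that $\|(h-\eps)f\|_\infty \le \|hf\|_\infty$. Where you genuinely diverge is in the treatment of the jump term. The paper disposes of it by citation: the jump equals $\eps|u_h^0|_{2,\infty}$ by \eqref{eq:aposteriori-u2}, which is bounded by $c_0^{-1}\|\calI_h^* F^h\|_\infty$ via \eqref{eq:u_0_h_estimate} (itself resting on the equivalence Lemma \ref{lem:magic} and the strong-form regularity argument of Proposition \ref{prop:HQC_intermediate}), and finally $\|\calI_h^* f^h\|_\infty \le \eps^{-1}\|hf\|_\infty$ by Lemma \ref{lem:apriori}. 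You instead re-derive this bound from scratch: testing \eqref{eq:coarse_equation} with the hat function $w_\xi^h$, telescoping to the flux jump $\delPhi^0(s_L)-\delPhi^0(s_R)$, applying the mean value theorem with the coercivity \eqref{eq:infsup_homogenized_extended}, and bounding $|\<f,w_\xi^h\>_\calL|$ by a weighted sum. It is worth seeing that your computation is exactly the paper's machinery unrolled: your telescoped identity is the strong form \eqref{eq:homogenized_strong} of the equivalent atomistic formulation of Lemma \ref{lem:magic} evaluated at a node, since $\calI_h^* F^h(\xi) = \eps^{-1}\<f, w_\xi^h\>_\calL$ under exact summation, and your weighted estimate is the nodal instance of the second bound in Lemma \ref{lem:apriori}. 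What your version buys is self-containedness and transparency about where $\|hf\|_\infty$ enters; what the paper's version buys is brevity, reusing infrastructure already needed for the a posteriori theorem. Two cosmetic remarks: the discrete integral of $w_\xi^h$ over an adjacent element is \emph{half} the element size up to $O(\eps)$, not the full size (your conclusion that $\eps\sum_x w_\xi^h(x)/h(x)$ is bounded by an absolute constant is unaffected); and the homogenized solution $u_h^0$ lives in the ball $B_{\calU_\#^{1,\infty}}(0,\rho_u)$ as in Proposition \ref{prop:HQC_intermediate} (with $\rho_u\le\rho_z$), not the ball centered at $\chi_*^\eps$ — though here you merely inherited an inconsistency present in the paper's own theorem statement.
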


\section{Inf-sup conditions and regularity of for the atomistic and the homogenized equations}\label{sec:inf-sup-conditions-and-regularity}

In this section we start by showing that the Assumption 3 of Section \ref{Main}
implies the inf-sup conditions needed for the subsequent analysis.
We then establish regularity results for the atomistic solution
\eqref{eq:equilibrium}, for the 
micro problem \eqref{eq:chi},
and for the homogenized solution \eqref{eq:homogenized}. These regularity results are essential 
to derive the a priori and a posteriori error estimates.

\subsection{Inf-sup Conditions}
\begin{lemma}\label{thm:assumption_3a_3b}
Assumption 3 implies the following assertions:
there exists a {\it coercivity constant} $c_0>0$ such that the following inf-sup conditions hold
	\begin{align}
	\label{eq:micro_stability}
	\inf_{ \substack{\eta\in\calU_\#(\calP), \\ |\eta|_{1,\infty}=1}}
	\sup_{ \substack{\zeta\in\calU_\#(\calP), \\ |\zeta|_{1,1}=1}}
	\sum_{r=1}^R \Big<\ddelPhi_r(D_{y,r} \chi_*(y); y) D_{y,r}\eta, D_{y,r}\zeta\Big>_\calP
	\geq~& 2 c_0
	\\
	\label{eq:infsup_exact}
	\inf_{ \substack{w\in\calU_\#(\calL), \\ |w|_{1,\infty}=1}}
	\sup_{ \substack{v\in\calU_\#(\calL), \\ |v|_{1,1}=1}}
	\sum_{r=1}^R \<\ddelPhi_r^\eps(D_r \chi^\eps_*) D_r w, D_r v \>_\calL
	\geq~& 2 c_0,
	\\
	\label{eq:infsup_homogenized}
	\inf_{ \substack{w\in\calU_\#(\calL), \\ |w|_{1,\infty}=1}}
	\sup_{ \substack{v\in\calU_\#(\calL), \\ |v|_{1,1}=1}}
	\<\ddelPhi^0(0) D w, D v \>_\calL = \ddelPhi^0(0)
	\geq~& 2 c_0,
	\end{align}
	where $\chi^\eps_*(x) := \chi_*\big(\smfrac x\eps\big)$ and $\Phi^0(0)$ is defined by \eqref{eq:Phi0} with $\chi(0; y) = \chi_*(y)$.
\end{lemma}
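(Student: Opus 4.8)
The plan is to establish \eqref{eq:micro_stability} and \eqref{eq:infsup_exact} by an explicit choice of test function, and then to reduce \eqref{eq:infsup_homogenized} to the scalar inequality $\ddelPhi^0(0)\ge 2c_0$. Since the coefficient field $\ddelPhi_r^\eps(D_r\chi^\eps_*)$ in \eqref{eq:infsup_exact} is the $p$-periodic extension of $y\mapsto\ddelPhi_r(D_{y,r}\chi_*(y);y)$, the minimum of its $r=1$ entry over $\calL$ equals $\min_y\ddelPhi_1(D_{y,1}\chi_*;y)$ and the maxima of its $r\ge2$ entries equal $\max_y|\ddelPhi_r(D_{y,r}\chi_*;y)|$. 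Hence \eqref{eq:micro_stability} and \eqref{eq:infsup_exact} have identical structure and I would treat them by one argument, abbreviating the coefficient as $b_r$ and setting $2c_0:=\smfrac12\min_y b_1-\sum_{r=2}^R\max_y|b_r|$, which is positive by Assumption 3, see \eqref{eq:nn_dominance}.

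Fix $w$ with $|w|_{1,\infty}=\|Dw\|_\infty=1$, let $x^*$ be a point where $|Dw|$ attains the value $1$, and --- using that $Dw$ has zero mean and is not identically zero --- let $x^\dagger\ne x^*$ be a point where $Dw$ takes the opposite sign. I would test the bilinear form against the $v\in\calU_\#$ determined by $Dv=\smfrac N2\,\mathrm{sign}(Dw(x^*))\,(w_{x^*}-w_{x^\dagger})$; this $v$ is admissible since $Dv$ has zero mean and $|v|_{1,1}=\|Dv\|_1=1$. The nearest-neighbour ($r=1$) contribution then equals $\smfrac12\big(b_1(x^*)\,|Dw(x^*)|-b_1(x^\dagger)\,\mathrm{sign}(Dw(x^*))\,Dw(x^\dagger)\big)$, and both summands are nonnegative because $b_1>0$ and $Dw(x^\dagger)$ has the sign opposite to $Dw(x^*)$; hence this contribution is at least $\smfrac12\min_y b_1$. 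Every term with $r\ge2$ is bounded in modulus by $\|b_r D_r w\|_\infty\,\|D_r v\|_1\le\max_y|b_r|$, where I use $\|D_r w\|_\infty\le\|Dw\|_\infty=1$ and $\|D_r v\|_1\le\|Dv\|_1=1$ from \eqref{eq:D_r_estimate}. Summing, the form evaluated at $(w,v)$ is at least $\smfrac12\min_y b_1-\sum_{r\ge2}\max_y|b_r|=2c_0$, which proves \eqref{eq:micro_stability} and \eqref{eq:infsup_exact}. I would point out that the factor $\smfrac12$ in Assumption 3 is precisely the price of the zero-mean constraint on $Dv$, which forces the test function to carry opposite masses at $x^*$ and $x^\dagger$.

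For \eqref{eq:infsup_homogenized} the form has the constant coefficient $\ddelPhi^0(0)$, so the inner supremum reduces by $L^1$--$L^\infty$ duality to the value $\ddelPhi^0(0)$ stated on the right of \eqref{eq:infsup_homogenized}, and the substance is the lower bound $\ddelPhi^0(0)\ge 2c_0$. To obtain it I would differentiate the homogenization formulas. By the implicit function theorem --- applicable because the cell form $a(\phi,\eta):=\sum_r\<b_r D_{y,r}\phi,D_{y,r}\eta\>_\calP$ is coercive on $\calU_\#(\calP)$, which follows from Assumption 3 by the $L^2$ estimate below --- the corrector $\chi(z)$ is differentiable at $z=0$ with $\chi(0)=\chi_*$ (Assumption 2); set $\psi:=\partial_z\chi(0)\in\calU_\#(\calP)$. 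Differentiating \eqref{eq:Phi0} once and cancelling the corrector terms via the cell equation \eqref{eq:chi} gives $\delPhi^0(z)=\sum_r\<\delPhi_r(z+D_{y,r}\chi(z))\>_\calP$, while differentiating \eqref{eq:chi} once yields the linearized problem $\sum_r\<b_r(1+D_{y,r}\psi),D_{y,r}\eta\>_\calP=0$ for all $\eta\in\calU_\#(\calP)$. Differentiating once more and testing the linearized problem with $\eta=\psi$ then collapses the result to the energy identity
\[
\ddelPhi^0(0)=\sum_{r=1}^R\big\<b_r\,(1+D_{y,r}\psi)^2\big\>_\calP.
\]

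Writing $g:=1+D_y\psi$, so that $\<g\>_\calP=1$ and $1+D_{y,r}\psi=A_{y,r}g$, the identity splits into the nearest-neighbour term $\<b_1 g^2\>_\calP\ge(\min_y b_1)\,\|g\|_2^2$ and remainder terms bounded below by $-\sum_{r\ge2}(\max_y|b_r|)\,\|A_{y,r}g\|_2^2\ge-\sum_{r\ge2}(\max_y|b_r|)\,\|g\|_2^2$, the averaging $A_{y,r}$ being a convex combination of isometries and hence an $\|\cdot\|_2$-contraction. Thus $\ddelPhi^0(0)\ge(\min_y b_1-\sum_{r\ge2}\max_y|b_r|)\,\|g\|_2^2$, and since $\|g\|_2^2\ge\<g\>_\calP^2=1$ by Jensen's inequality and $\min_y b_1>0$, this is at least $\min_y b_1-\sum_{r\ge2}\max_y|b_r|=2c_0+\smfrac12\min_y b_1\ge 2c_0$, proving \eqref{eq:infsup_homogenized}. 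The main obstacle I anticipate is the bookkeeping of the previous paragraph: one must justify differentiability of the implicitly defined corrector $\chi(z)$ so that $\psi$ exists, correctly cancel the first-order corrector contributions through the cell equation, and arrive at the clean quadratic representation of $\ddelPhi^0(0)$; once that energy identity is in hand, the lower bound is the very same nearest-neighbour dominance estimate used at the microscopic level.
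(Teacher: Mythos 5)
Your proposal is correct, and for \eqref{eq:micro_stability} and \eqref{eq:infsup_exact} it is essentially the paper's own argument: your explicit test function $Dv=\smfrac{N}{2}\,\mathrm{sign}(Dw(x^*))\,(w_{x^*}-w_{x^\dagger})$ is precisely the construction inside the paper's Lemma \ref{lem:sup_norm}, which the paper invokes for the $r=1$ term, and the $r\ge 2$ terms are bounded identically via \eqref{eq:D_r_estimate}; if anything your version is more careful, since the paper applies Lemma \ref{lem:sup_norm} to $u=\ddelPhi_1^\eps(D\chi^\eps_*)\,Dw$, which need not have zero mean as that lemma formally requires, whereas you use directly that $b_1>0$ and that $Dw$ has zero mean. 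The genuine divergence is in \eqref{eq:infsup_homogenized}. The paper disposes of it in one line by asserting $\ddelPhi^0(0)=\sum_{r=1}^R\<\ddelPhi_r(D_{y,r}\chi_*)\>_\calP$ and invoking \eqref{eq:nn_dominance}; that equality silently discards the corrector contribution: with $b_r:=\ddelPhi_r(D_{y,r}\chi_*)$ and $\psi:=\del\chi(0)$, differentiation gives $\ddelPhi^0(0)=\sum_r\<b_r(1+D_{y,r}\psi)\>_\calP$, and the extra term $\sum_r\<b_r\,D_{y,r}\psi\>_\calP$ is generically nonzero and nonpositive (already for $R=1$ the true value is the \emph{harmonic} mean of $b_1$, not the arithmetic mean the paper writes), so the paper's expression is in fact only an upper bound for $\ddelPhi^0(0)$. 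Your route --- testing the linearized cell problem with $\psi$ to obtain the energy identity
\begin{equation*}
\ddelPhi^0(0)=\sum_{r=1}^R\big\<b_r(1+D_{y,r}\psi)^2\big\>_\calP,
\end{equation*}
then using Jensen's inequality and the $\ell^2$-contractivity of $A_{y,r}$ --- is the standard homogenization identity and is the correct repair of this step; moreover, your direct $L^2$ coercivity argument for the cell problem keeps the construction of $\psi$ independent of Theorem \ref{th:microproblem} (which is proved later and itself relies on \eqref{eq:micro_stability}), so there is no circularity. One caveat, shared by your write-up and the lemma statement alike: because $Dv$ is constrained to have zero mean, $L^1$--$L^\infty$ duality does not give the inf--sup \emph{equal} to $\ddelPhi^0(0)$, only bounded below by $\smfrac12\ddelPhi^0(0)$ (via the Lemma \ref{lem:sup_norm} mechanism); this is harmless for your argument, since your identity yields $\ddelPhi^0(0)\ge\min_y b_1-\sum_{r\ge2}\max_y|b_r|\ge 4c_0$, so the required bound $2c_0$ survives the extra factor $\smfrac12$.
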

\begin{proof}

We start with the inf-sup condition \eqref{eq:infsup_exact}.
We use the following estimate
\begin{align}
|\<\ddelPhi_r^\eps(D_r \chi^\eps_*) D_r w, D_r v \>_\calL|
\leq~& \notag
\max_x |\ddelPhi_r^\eps(D_r \chi^\eps_*; x)| \, \|D_r w\|_{\infty} \, \|D_r v\|_{1}
\\ \leq~& \label{eq:assumption_3a_3b:r_geq_2}
\max_x |\ddelPhi_r^\eps(D_r \chi^\eps_*; x)| \, \|D w\|_{\infty} \, \|D v\|_{1},
\end{align}
for all $r>1$.
For $r=1$ we use Lemma \ref{lem:sup_norm} and estimate
\begin{align} \notag
&
	\inf_{ |w|_{1,\infty}=1}
	\sup_{ |v|_{1,1}=1}
	\<\ddelPhi_1^\eps(D_r \chi^\eps_*) D w, D v \>_\calL
\\ \geq~&  \notag
	\smfrac12
	\inf_{ |w|_{1,\infty}=1}
	 \|\ddelPhi_1^\eps(D_r \chi^\eps_*) D w\|_{\infty}
\\ \geq~&  \label{eq:assumption_3a_3b:r_1}
	\smfrac12 \min_x |\ddelPhi_1^\eps(D_r \chi^\eps_*; x)|.
\end{align}
Thus, notice that \eqref{eq:infsup_exact} follows from \eqref{eq:assumption_3a_3b:r_geq_2}, \eqref{eq:assumption_3a_3b:r_1}, the assumption \eqref{eq:nn_dominance}, and the definition $\Phi_r^\eps(\bullet; x) = \Phi_r(\bullet; \smfrac x\eps)$.

Proving condition \eqref{eq:micro_stability} is in all ways similar to proving \eqref{eq:infsup_exact}, with an obvious change of spaces $\calU_\#(\calL)$ to $\calU_\#(\calP)$.

Finally, notice that \eqref{eq:infsup_homogenized} follows directly from estimating
\[
\ddelPhi^0(0) = \sum_{r=1}^R \< \ddel\Phi_r(D_r \chi_*(y);y) \>_{y\in\calP} \geq 2 c_0
\]
using \eqref{eq:nn_dominance}.
\end{proof}
\begin{remark}
The condition \eqref{eq:micro_stability} is the same as requiring that the Hessian of $\sum_{r=1}^R \ddelPhi_r(D_{y,r} \chi_*)$ is positive definite, due to equivalence of the norms on finite-dimensional spaces.
\end{remark}

The following Lemma has been used in the proof above.
\begin{lemma}\label{lem:sup_norm}
For $u\in\calU_\#$, 
\begin{equation}\label{eq:sup_norm}
	\sup_{ \substack{v\in\calU_\#(\calL), \\ |v|_{1,1}=1}} \<u, Dv\>_\calL \geq \smfrac12  \|u\|_{\infty}.
\end{equation}
\end{lemma}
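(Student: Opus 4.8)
The plan is to recognize the statement as a discrete $L^1$--$L^\infty$ duality in which the zero-mean constraint on the test functions is precisely what produces the factor $\smfrac12$. First I would reduce the supremum over $v$ to a supremum over gradients. Setting $g:=Dv$, note that $\<g\>_\calL = \smfrac1N\sum_{x\in\calL}\big(v(x+\eps)-v(x)\big) = 0$ by periodicity, so $g\in\calU_\#(\calL)$; conversely, any $g\in\calU_\#(\calL)$ admits a periodic discrete antiderivative (periodicity follows exactly from $\<g\>_\calL=0$), unique up to an additive constant, so fixing its mean to zero yields a unique $v\in\calU_\#(\calL)$ with $Dv=g$. Hence $v\mapsto Dv$ is a bijection of $\calU_\#(\calL)$ onto itself with $|v|_{1,1}=\|Dv\|_1=\|g\|_1$, and the left-hand side of \eqref{eq:sup_norm} equals
\[
\sup_{\substack{g\in\calU_\#(\calL)\\ \|g\|_1=1}} \<u, g\>_\calL.
\]

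Next I would exhibit an explicit near-optimal $g$. The case $u\equiv0$ is trivial, so assume $u\not\equiv0$; since $\<u\>_\calL=0$, the function $u$ attains a nonnegative maximum at some $x_+\in\calL$ and a nonpositive minimum at some $x_-\in\calL$, whence $\|u\|_\infty=\max\{u(x_+),-u(x_-)\}$. I would take
\[
g := \smfrac N2\big(w_{x_+}-w_{x_-}\big),
\]
with $w_\xi$ the nodal basis of $\calU(\calL)$. Then $\<g\>_\calL = \smfrac1N\cdot\smfrac N2(1-1)=0$, so $g\in\calU_\#(\calL)$, while $\|g\|_1=\smfrac1N\cdot\smfrac N2(1+1)=1$, so $g$ is admissible. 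A direct computation gives
\[
\<u,g\>_\calL = \smfrac12\big(u(x_+)-u(x_-)\big).
\]

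It then remains only to bound the right-hand side from below. Since $u(x_+)\geq0\geq u(x_-)$, the quantity $u(x_+)-u(x_-)=u(x_+)+\big(-u(x_-)\big)$ is a sum of two nonnegative numbers and hence dominates each of them, so $u(x_+)-u(x_-)\geq\max\{u(x_+),-u(x_-)\}=\|u\|_\infty$; combining this with the displayed identity yields $\<u,g\>_\calL\geq\smfrac12\|u\|_\infty$, which proves \eqref{eq:sup_norm}. I do not expect a genuine obstacle: the only point needing care is the identification in the first paragraph of the admissible gradients with all of $\calU_\#(\calL)$ (surjectivity of $D$ onto the zero-mean functions, together with periodicity of the antiderivative), after which the two-point test function makes the estimate immediate. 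The constant $\smfrac12$ is visibly sharp, being forced by the need to split the unit $\ell^1$ mass of $g$ between $x_+$ and $x_-$.
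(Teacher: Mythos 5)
Your proof is correct and takes essentially the same approach as the paper: both exhibit a test function whose gradient is supported at two points---a maximizer of $u$ and a point where $u$ has the opposite sign---and evaluate $\<u,Dv\>_\calL=\smfrac12\big(u(x_+)-u(x_-)\big)\geq\smfrac12\|u\|_{\infty}$. Your normalization is in fact the more careful one: with the paper's stated norms, its choice $Dv_*=\pm\smfrac12$ gives $|v_*|_{1,1}=\smfrac1N$ rather than $1$, so the test gradient must be scaled by $N$ exactly as in your $g=\smfrac{N}{2}(w_{x_+}-w_{x_-})$.
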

\begin{proof}
Let $x_1 := {\rm argmax}|u|$.
We will assume that $u(x_1)>0$ without loss of generality (since both parts of \eqref{eq:sup_norm} are invariant \wrt changing $u$ to $-u$).
Choose $x_2$ such that $u(x_2)\leq 0$ (such $x_2$ always exists for a function with zero mean) and define $v_*$ so that
\[
Dv_*(x) = \begin{cases}
\smfrac12 & x=x_1 \\ 
-\smfrac12 & x=x_2 \\ 
0 & \text{otherwise.}
\end{cases}
\]
We obviously have $ |v_*|_{1,1}=1$ and
\[
\<u, Dv_*\>_\calL = \smfrac12 u(x_1) - \smfrac12 u(x_2) \geq \smfrac12 u(x_1) = \smfrac12  \|u\|_{\infty}
.
\]
\end{proof}

In the rest of the paper we will use \eqref{eq:micro_stability}--\eqref{eq:infsup_homogenized} instead of using Assumption 3 directly. Therefore, the regularity and convergence results of this paper would hold if the $\calU^{1,\infty}$ stability result \eqref{eq:micro_stability}--\eqref{eq:infsup_homogenized} is proved using assumptions other than Assumption 3.
Note, however, that the Assumption 3 is rather standard in the case of simple lattices (i.e., no dependence on $y$)
and in the presence of only nearest neighbor interaction it can also be shown to be sharp.



\subsection{Regularity results}\label{sec:analysis_atm}
In this section we prove our main regularity results for the atomistic and homogenized solutions.
Instrumental for these results is a version of the Implicit Function Theorem (IFT) that we summarize in
the Appendix (see Theorem \ref{thm:IFT}) for the convenience of the readers.
For future use, we define
\begin{align*}
C_\Phi
:=~& \phantom{\displaystyle \max\mathstrut}
\max_{y\in\calP} \sum_{r\in\calR} |\Phi_r(\bullet, y)|_{\C^{0,1}}
,
\\
C_\Phi^{(1)}
:=~& \phantom{\displaystyle \max_{\ell=1,2}\mathstrut}
\max_{y\in\calP} \sum_{r\in\calR} |\delta\Phi_r(\bullet, y)|_{\C^{0,1}}
,
\\
C_\Phi^{(2)}
:=~&
\max_{\ell=1,2}
	\max_{y\in\calP} \sum_{r\in\calR} |\delta^\ell \Phi_r(\bullet, y)|_{\C^{0,1}}
, \text{ and recall}
\\
C_\Phi^{(1,1)}
=~& \phantom{\displaystyle \max\mathstrut}
\max_{y\in\calP} \sum_{r\in\calR} |r \delta \Phi_r(\bullet, y)|_{\C^{0,1}}.
\end{align*}

\subsubsection*{Regularity of the Micro-problem}

\begin{theorem}
\label{th:microproblem}
There exist $\rho_z>0$ and $\rho_\chi>0$ such that 
\begin{itemize}
	\item[(a)]
	For all $ |z|<\rho_z$, $\chi=\chi(z)$ satisfying \eqref{eq:chi} exists in $\C^{1,1}\big((-\rho_z,\rho_z); U\big)$, is unique within the ball $U=\{\|\chi(z)-\chi_*|_{1,\infty}<\rho_\chi\}$, and
	\begin{align} \label{eq:chi_estimate}
	|\chi|_{\C^{0,1}}
	\leq~&
	c_0^{-1} C_\Phi^{(1)}
	\\ \label{eq:chi_der_estimate}
	|\chi|_{\C^{1,1}}
	\leq~&
	\Const\big(c_0^{-1} C_\Phi^{(2)}\big)
	.
	\end{align}
	\item[(b)]
	The homogenized energy density $\Phi^0=\Phi^0(z)$ is well-defined by \eqref{eq:Phi0}, $\Phi^0\in\C^{2,1}\big((-\rho_z,\rho_z)\big)$, and
	\begin{align}
	\label{eq:Phi_0_est}
	|\Phi^0|_{\C^{0,1}}
	\leq~&
	C_\Phi
	\\ 	\label{eq:Phi_1_est}
	|\del\Phi^0|_{\C^{0,1}}
	\leq~&
	C_\Phi^{(1)} \Const\big( c_0^{-1} C_\Phi^{(1)} \big)
	\\ 	\label{eq:Phi_2_est}
	|\ddel\Phi|_{\C^{0,1}}
	\leq~&
	C_\Phi^{(2)} \Const\big( c_0^{-1} C_\Phi^{(2)} \big)
	\\ \label{eq:infsup_homogenized_extended}
	c_0 \leq~&
	\inf_{|z|<\rho_z}
	\inf_{ |w|_{1,\infty}=1}
	\sup_{ |v|_{1,1}=1}
	\<\ddelPhi^0(z) D_r w, D_r v \>_\calL
	.
	\end{align}
\end{itemize}
\end{theorem}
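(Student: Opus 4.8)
The plan is to derive part (a) from a quantitative application of the Implicit Function Theorem (Theorem \ref{thm:IFT}) and part (b) from an envelope-type computation that exploits the micro-equilibrium \eqref{eq:chi}. First I would introduce the map $G:\bbR\times\calU_\#^{1,\infty}(\calP)\to\big(\calU_\#^{1,1}(\calP)\big)^*$ defined by
\[
\<G(z,\chi),\eta\>_\calP=\sum_{r=1}^R\<\delPhi_r(z+D_{y,r}\chi),D_{y,r}\eta\>_\calP,
\]
so that $\chi(z)$ solving \eqref{eq:chi} is exactly a zero of $G(z,\cdot)$. By Assumption 2 we have $G(0,\chi_*)=0$, and by Assumption 1 in the equivalent $\C^{2,1}$ formulation of the Remark the induced Nemytskii-type map $G$ is $\C^{1,1}$ near $(0,\chi_*)$. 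Its partial derivative $\del_\chi G(0,\chi_*)$ is precisely the bilinear form appearing in \eqref{eq:micro_stability}; since $\calU_\#(\calP)$ is finite-dimensional, the inf-sup bound of Lemma \ref{thm:assumption_3a_3b} makes $\del_\chi G(0,\chi_*):\calU_\#^{1,\infty}(\calP)\to\big(\calU_\#^{1,1}(\calP)\big)^*$ invertible with inverse norm controlled by $(2c_0)^{-1}$.

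With these ingredients the quantitative IFT yields $\rho_z,\rho_\chi>0$ and a unique $\chi\in\C^{1,1}\big((-\rho_z,\rho_z);U\big)$ solving \eqref{eq:chi}, which is the existence and uniqueness assertion of (a). The bounds \eqref{eq:chi_estimate}--\eqref{eq:chi_der_estimate} then follow by differentiating \eqref{eq:chi} in $z$: one differentiation gives, for all $\eta\in\calU_\#(\calP)$,
\[
\sum_{r=1}^R\<\ddelPhi_r(z+D_{y,r}\chi)\,D_{y,r}\chi',D_{y,r}\eta\>_\calP=-\sum_{r=1}^R\<\ddelPhi_r(z+D_{y,r}\chi),D_{y,r}\eta\>_\calP,
\]
and applying the coercivity \eqref{eq:micro_stability} (valid near $\chi_*$ after shrinking $\rho_\chi$) together with the definition of $C_\Phi^{(1)}$ bounds $|\chi'|_{1,\infty}$ by $c_0^{-1}C_\Phi^{(1)}$, which is \eqref{eq:chi_estimate}. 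A second differentiation produces the analogous linear problem for $\chi''$ whose right-hand side is controlled by $C_\Phi^{(2)}$ and the bound on $\chi'$ just obtained; coercivity again gives \eqref{eq:chi_der_estimate}.

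For part (b) the crucial observation is the envelope identity. Differentiating $\Phi^0(z)=\sum_r\<\Phi_r(z+D_{y,r}\chi(z))\>_\calP$ produces a term $\sum_r\<\delPhi_r(z+D_{y,r}\chi),D_{y,r}\chi'\>_\calP$, and since $\chi(z)\in\calU_\#(\calP)$ for every $z$ its derivative $\chi'(z)$ is an admissible test function in \eqref{eq:chi}; choosing $\eta=\chi'(z)$ cancels that term exactly, leaving the reduced formula
\[
\del\Phi^0(z)=\sum_{r=1}^R\<\delPhi_r(z+D_{y,r}\chi(z))\>_\calP,
\]
in which $\chi'$ no longer appears. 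This is what promotes the regularity to $\C^{2,1}$: one further differentiation gives $\ddelPhi^0(z)=\sum_r\<\ddelPhi_r(z+D_{y,r}\chi)(1+D_{y,r}\chi')\>_\calP$, a product of the Lipschitz map $\ddel\Phi_r$ with the Lipschitz factor $1+D_{y,r}\chi'$, hence Lipschitz in $z$, so $\Phi^0\in\C^{2,1}$.

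The quantitative bounds \eqref{eq:Phi_0_est}--\eqref{eq:Phi_2_est} then follow by reading $\del\Phi^0$ and $\ddelPhi^0$ off the formulas above, bounding the Lipschitz constant of $\ddelPhi^0$, and inserting the $\chi$-estimates of (a) together with the definitions of $C_\Phi$, $C_\Phi^{(1)}$, $C_\Phi^{(2)}$. Finally \eqref{eq:infsup_homogenized_extended} is obtained from \eqref{eq:infsup_homogenized} by continuity: as $\ddelPhi^0$ is Lipschitz with the constant just controlled, shrinking $\rho_z$ forces $|\ddelPhi^0(z)-\ddelPhi^0(0)|\leq c_0$ for $|z|<\rho_z$, whence $\ddelPhi^0(z)\geq c_0$ uniformly. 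The main obstacle is the envelope step---recognizing that the micro-equilibrium annihilates the $\chi'$ contribution in $\del\Phi^0$---because without it the naive chain rule would only deliver $\Phi^0\in\C^{1,1}$; the remaining effort is careful bookkeeping of the dual $\calU^{1,\infty}$/$\big(\calU^{1,1}\big)^*$ norms in the IFT application.
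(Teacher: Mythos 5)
Your proposal is correct and takes essentially the same route as the paper: your weak-form map $G$ is the paper's IFT map $F$ written in dual form (with \eqref{eq:micro_stability} supplying invertibility of $\del_\chi G$), and part (b) is exactly the paper's envelope computation, testing \eqref{eq:chi} with $\eta=\del\chi(z)$ to kill the $\chi'$ term, followed by the same continuity argument for \eqref{eq:infsup_homogenized_extended}. The only cosmetic difference is that you re-derive \eqref{eq:chi_estimate}--\eqref{eq:chi_der_estimate} by implicitly differentiating \eqref{eq:chi} (where, since $\Phi_r$ is only $\C^{2,1}$, your ``second differentiation'' should strictly be a difference-quotient argument on $\chi'$), whereas the paper reads these bounds off parts (b) and (c) of its quantitative IFT (Theorem \ref{thm:IFT}), whose appendix proof is that very same implicit-differentiation argument.
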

\begin{proof}
{\it Proof of (a)}

We will apply the IFT to the mapping
\[
F:\bbR\times \calU^{1,\infty}_\#(\calP)\to\calU^{-1,\infty}_\#(\calP)
,\quad
F(z,\chi) = -\sum_{r\in\calR} D_{y,-r} \delPhi_r(z+D_{y,r} \chi)
.
\]
Note that \eqref{eq:micro_stability} is exactly condition (ii) of the IFT.
Thus, to apply the IFT, we only need to establish that $F\in\C^{1,1}$.

Indeed, the following shows that $|\delta_\chi F|_{\C^{0,1}} \leq C_\Phi^{(2)}$:
\begin{align}
\notag
& |\delta_\chi F(z', \chi')-\delta_\chi F(z'', \chi'')|_{-1,\infty}
\\ =~& \notag
\sup_{ |\eta|_{1,\infty}=1}
\Big | \sum_{r\in\calR} D_{y,-r} [\ddelPhi_r(z'+D_{y,r} \chi') -\ddelPhi_r(z''+D_{y,r} \chi'')] D_{y,r}\eta \Big|_{-1,\infty}
\\ \leq~& \notag
\sup_{ |\eta|_{1,\infty}=1}
\Big \| \sum_{r\in\calR} [\ddelPhi_r(z'+D_{y,r} \chi') -\ddelPhi_r(z''+D_{y,r} \chi'')] D_{y,r}\eta \Big\|_{\infty}
\\ \leq~& \notag
\Big \| \sum_{r\in\calR} \ddelPhi_r(z'+D_{y,r} \chi')-\ddelPhi_r(z''+D_{y,r} \chi'') \Big\|_{\infty}
\\ =~& \notag
\max_{y\in\calP} \Big|\sum_{r\in\calR} \ddelPhi_r(z'+D_{y,r} \chi'(y); y)-\ddelPhi_r(z''+D_{y,r} \chi''(y); y) \Big|
\\ \leq~& \notag
\max_{y\in\calP} \sum_{r\in\calR} |\ddelPhi_r(\bullet, y)|_{\C^{0,1}} \big|(z'-z'')+D_{y,r} (\chi' - \chi'')\big|
\\ \leq~& \notag 
\Big(\max_{y\in\calP} \sum_{r\in\calR}  |\ddelPhi_r(\bullet, y)|_{\C^{0,1}}\Big) \big(|z'-z''|+|\chi' - \chi''|_{1,\infty}\big)
,
\end{align}
where we used \eqref{eq:D_r_estimate} (and its consequence $ |D_r u|_{-1,\infty} \leq  \|u\|_{\infty}$ $\forall u\in\calU(\calL)$).
The bound on $|\delta_z F|_{\C^{0,1}}$ is obtained in the same manner.


We hence get existence, uniqueness, and \eqref{eq:chi_der_estimate}.
Finally, \eqref{eq:chi_estimate} is obtained from $|F|_{\C^{0,1}} \leq C_\Phi^{(1)}$ which can be proved by calculations similar to the above.

{\it Proof of (b)}
Compute the first derivative:
\[
\delPhi^0(z)
=
\sum_{r=1}^R \<\delPhi_r(z+ D_{y,r} \chi(z)), 1+D_{y,r}\del\chi(z)\>_\calP
=
\sum_{r=1}^R \<\delPhi_r(z+ D_{y,r} \chi(z)), 1\>_\calP,
\]
the last step being due to \eqref{eq:chi}.
From here we get \eqref{eq:Phi_0_est} by taking maximum over $z$ and recalling that with the assumed regularity of $\Phi^0$, we have that $|\Phi^0|_{\C^{0,1}} = \|\del\Phi^0\|_{\C}$.

The second derivative is
\[
\ddelPhi^0(z) =
\sum_{r=1}^R \<\ddelPhi_r(z+ D_{y,r} \chi(z)),1+ D_{y,r} \del\chi(z)\>_\calP
.
\]
By taking $\C$- and $\C^{0,1}$-norms of this expression we get \eqref{eq:Phi_1_est} and \eqref{eq:Phi_2_est}, respectively.

The coercivity in a neighborhood of $z=0$, \eqref{eq:infsup_homogenized_extended}, is a consequence of \eqref{eq:infsup_homogenized} and continuity of $\ddelPhi^0(z)$.
\end{proof}

\subsubsection*{Regularity of the atomistic and the homogenized problems}

Define $\chi^\eps(z;x) := \chi(z;x/\eps)$ and $\chi^\eps_*(x) := \chi_*(x/\eps)$.
We fix $\rho_z$ and $\rho_\chi$ as given by the Theorem \ref{th:microproblem} and moreover assume that $\rho_\chi$ is chosen such that $\rho_\chi \leq \Const(1)$.

\begin{theorem}
\label{th:macroproblem}
There exist $\rho_f>0$ and $\rho_u>0$ such that:
\begin{itemize}
\item[(a)] For all $f\in B_{\calU_\#^{-1,\infty}}(0,\rho_f)$, the solution $u$ of \eqref{eq:equilibrium} exists and is unique in $B_{\calU_\#^{1,\infty}}(\chi^\eps_*,\rho_u)$.
	Moreover, $u=u(f) \in \C^{1,1}(B_{\calU_\#^{-1,\infty}}(0,\rho_f); B_{\calU_\#^{1,\infty}}(\chi^\eps_*,\rho_u))$,
	\begin{align*}
		\|\del_f u\|_\C
		\leq~&
		c_0^{-1}
		,\qquad\text{and}
		\\
		|\del_f u|_{\C^{0,1}}
		\leq~&
		\Const\big(c_0, C_\Phi^{(2)}\big)
		.
	\end{align*}

\item[(b)] For all $f\in B_{\calU_\#^{-1,\infty}}(0,\rho_f)$, the solution $u^0$ of \eqref{eq:homogenized} exists and is unique in $B_{\calU_\#^{1,\infty}}(0,\rho_u)$.
	Moreover, $u^0=u^0(f) \in \C^{1,1}(B_{\calU_\#^{-1,\infty}}(0,\rho_f); B_{\calU_\#^{1,\infty}}(0,\rho_u))$ and
	\begin{align} \notag
		\|\del_f u\|_\C
		\leq~&
		c_0^{-1}
		,
		\\ \notag
		|\del_f u^0|_{\C^{0,1}}
		\leq~&
		\Const\big(c_0, C_\Phi^{(2)}\big)
		,\qquad\text{and}
		\\
		\label{eq:D2u0_estimate}
		 |u^0(f)|_{2,\infty}
		\leq~&
		c_0^{-1}\,  \|f\|_{\infty}
		\quad \forall f\in B_{\calU_\#^{-1,\infty}}(0,\rho_f)
		.
	\end{align}
In addition, the corrected solution $u^\c = \calI_\# (u^0 + \eps \chi^\eps(Du^0))$ is within $B_{\calU_\#^{1,\infty}}(\chi^\eps_*,\rho_u)$.
	
\item[(c)] The following estimates hold:
	\begin{align}
	\label{eq:ucu_estimate}
	2  \|u^\c-u\|_{\infty}
		\leq~&  |u^\c-u|_{1,\infty}
		\leq \eps \,\Const\big(c_0^{-1} C_\Phi^{(1,1)}\big)\,  |u^0|_{2,\infty}.
	\\
	\label{eq:u0u_estimate}
	\|u^0-u\| \leq~& \eps \,\Const\big(c_0^{-1} C_\Phi^{(1,1)}\big)\,  |u^0|_{2,\infty} + \eps\,\Const(p).
	\end{align}
\end{itemize}
\end{theorem}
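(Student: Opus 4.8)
The plan is to obtain parts~(a) and~(b) as two applications of the Implicit Function Theorem (Theorem~\ref{thm:IFT}), in exact parallel with the proof of Theorem~\ref{th:microproblem}(a), and then to reduce part~(c) to a consistency estimate combined with the stability produced in~(a). For part~(a) I would apply the IFT to the residual map $G(f,u):=\delE(u)-f$, regarded as a map $\calU_\#^{-1,\infty}(\calL)\times\calU_\#^{1,\infty}(\calL)\to\calU_\#^{-1,\infty}(\calL)$, at the base point $(0,\chi^\eps_*)$. Assumption~2 gives $G(0,\chi^\eps_*)=0$, the inf-sup condition~\eqref{eq:infsup_exact} is precisely hypothesis~(ii) of the IFT for $\delta_u G(0,\chi^\eps_*)$, and the $\C^{1,1}$-regularity of $G$ (with $|\delta_u G|_{\C^{0,1}}\leq C_\Phi^{(2)}$) follows from the same chain-rule estimates as in Theorem~\ref{th:microproblem}(a), now posed on $\calL$ and using~\eqref{eq:D_r_estimate}. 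The IFT then yields existence, local uniqueness in $B_{\calU_\#^{1,\infty}}(\chi^\eps_*,\rho_u)$, and $u=u(f)\in\C^{1,1}$; differentiating $G(f,u(f))=0$ gives $\delta_f u=[\delta_u G]^{-1}$, so coercivity yields $\|\delta_f u\|_\C\leq c_0^{-1}$ and the standard perturbation bound for the inverse (through $C_\Phi^{(2)}$) yields $|\delta_f u|_{\C^{0,1}}\leq\Const(c_0,C_\Phi^{(2)})$.

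Part~(b) repeats this argument for $G^0(f,u^0):=-D[\delPhi^0(Du^0)]-Tf$ at the base point $(0,0)$, where coercivity is now~\eqref{eq:infsup_homogenized} extended to a neighborhood of $z=0$ by~\eqref{eq:infsup_homogenized_extended}, and the required $\C^{2,1}$-regularity of $\Phi^0$ is supplied by Theorem~\ref{th:microproblem}(b). The genuinely new ingredient is the $W^{2,\infty}$-bound~\eqref{eq:D2u0_estimate}: starting from the strong form~\eqref{eq:homogenized_strong}, I would use a discrete mean-value identity to write $D[\delPhi^0(Du^0)](x)=\ddelPhi^0(\xi(x))\,D^2u^0(x)$ for some $\xi(x)$ lying between $Du^0(x)$ and $Du^0(x+\eps)$ (so $|\xi(x)|<\rho_z$ once $\rho_u\leq\rho_z$), and then divide by $\ddelPhi^0(\xi(x))\geq c_0$ from~\eqref{eq:infsup_homogenized_extended} to obtain $|D^2u^0(x)|\leq c_0^{-1}|Tf(x)|\leq c_0^{-1}\|f\|_\infty$ pointwise. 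That $u^\c=\calI_\#(u^0+\eps\chi^\eps(Du^0))$ lies in $B_{\calU_\#^{1,\infty}}(\chi^\eps_*,\rho_u)$ is then checked by a direct estimate of the corrector contribution $\eps\chi^\eps(Du^0)$ in the $W^{1,\infty}$-seminorm, using the Lipschitz bound~\eqref{eq:chi_estimate} for $\chi$ and shrinking $\rho_f$ if necessary.

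For part~(c), the first inequality $2\|u^\c-u\|_\infty\leq|u^\c-u|_{1,\infty}$ is the one-dimensional discrete Poincar\'e inequality on the zero-mean space $\calU_\#$: on the unit torus any two sites are joined by an arc of length $\leq\smfrac12$, whence $\|w\|_\infty\leq\smfrac12\|Dw\|_\infty$ for every $w\in\calU_\#$ (the same mechanism underlies Lemma~\ref{lem:sup_norm}). The substance is the second inequality, which I would establish by consistency plus stability. I would insert $u^\c$ into the atomistic operator and estimate the residual $g:=\delE(u^\c)-f$ in $\calU_\#^{-1,\infty}$; the decisive computation expands $D_r u^\c=D_r u^0+\eps D_r[\chi^\eps(Du^0)]$ by the discrete chain rule, so that the $O(\eps^{-1})$ microscopic part of the corrector reassembles into the combination $Du^0+D_{y,r}\chi(Du^0)$ appearing in~\eqref{eq:Phi0} and cancels against the micro-equation~\eqref{eq:chi} defining $\delPhi^0$, leaving only macroscopic remainders controlled by $|u^0|_{2,\infty}$ through~\eqref{eq:chi_der_estimate} and discrete-derivative defects controlled by Lemma~\ref{lem:operator-facts} (notably~\eqref{eq:Dr-D_estimate} and~\eqref{eq:Ar_Ayr_estimate}). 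This yields $\|g\|_{-1,\infty}\leq\eps\,\Const(c_0^{-1}C_\Phi^{(1,1)})\,|u^0|_{2,\infty}$, and combining with the local Lipschitz stability of the atomistic solution map from~(a) (so that $|u^\c-u|_{1,\infty}\lesssim\|g\|_{-1,\infty}$) gives~\eqref{eq:ucu_estimate}. Finally~\eqref{eq:u0u_estimate} follows from $\|u^0-u\|\leq\|u^0-u^\c\|+\|u^\c-u\|$, where the second term is controlled by~\eqref{eq:ucu_estimate} and the first equals $\|\calI_\#(\eps\chi^\eps(Du^0))\|\leq\eps\|\chi^\eps(Du^0)\|_\infty\leq\eps\,\Const(p)$, the factor $p$ entering through the discrete Poincar\'e inequality on $\calP$ (of diameter $\sim p$) applied to $\chi\in\calU_\#(\calP)$.

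The main obstacle is the consistency estimate of part~(c). The difficulty lies entirely in the discrete chain rule for the two-scale field $x\mapsto\chi^\eps(Du^0(x);x)=\chi(Du^0(x);x/\eps)$: the operator $D_r$ differentiates the slow argument $Du^0$ and the fast variable $x/\eps$ simultaneously, and one must show that the fast contribution reproduces exactly the combination $z+D_{y,r}\chi(z)$ entering~\eqref{eq:Phi0}--\eqref{eq:chi} (so that the leading term is consistent with $\delPhi^0$), while every cross term and discretization defect is genuinely $O(\eps)$ and quantified by $|u^0|_{2,\infty}$ together with the $\C^{1,1}$-regularity of $\chi$. This is precisely where the mismatch estimate~\eqref{eq:Ar_Ayr_estimate} between full averaging $A_r$ and $y$-averaging $A_{y,r}$ enters, and where the equivalence between the coarse-grained and atomistic homogenized models (Lemma~\ref{lem:magic}) most streamlines the bookkeeping.
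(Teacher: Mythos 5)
Your proposal follows the paper's own proof essentially step by step: parts (a) and (b) via the IFT applied to $(f,u)\mapsto\delE(u)-f$ and $(f,u^0)\mapsto\delE^0(u^0)-f$; the bound \eqref{eq:D2u0_estimate} via the mean-value identity applied to the strong form \eqref{eq:homogenized_strong} together with $\ddelPhi^0\geq c_0$ from \eqref{eq:infsup_homogenized_extended}; and \eqref{eq:ucu_estimate} via stability plus a consistency estimate in which the micro-equation \eqref{eq:chi} cancellation and the operator estimates \eqref{eq:Dr-D_estimate}, \eqref{eq:Ar_Ayr_estimate} control the residual by $\eps\,\Const\big(c_0^{-1}C_\Phi^{(1,1)}\big)\,|u^0|_{2,\infty}$. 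Your use of the Lipschitz continuity of the solution map from (a) in place of the paper's direct coercivity/mean-value argument for $\delE$ is an equivalent formulation of the same stability (with the small caveat that one must check the perturbed force $f+g$ stays in $B_{\calU_\#^{-1,\infty}}(0,\rho_f)$, which holds since $g=O(\eps)$ or after shrinking radii).

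The one genuine gloss is at the very end, in \eqref{eq:u0u_estimate}: the discrete Poincar\'e inequality on $\calP$ alone does not yield $\|\chi^\eps(Du^0)\|_\infty\leq\Const(p)$. Poincar\'e bounds $\|\chi(z)\|_\infty$ by $\smfrac{p}{2}|\chi(z)|_{1,\infty}$, and $|\chi(z)|_{1,\infty}\leq\rho_\chi+|\chi_*|_{1,\infty}$, but $|\chi_*|_{1,\infty}$ (equivalently $\|\chi_*\|_\infty$) is not controlled by $p$ alone without a further ingredient. This is exactly where the paper invokes Assumption 0: monotonicity of $y+\chi_*(y)$ gives $D\chi_*\geq-1$, and Lemma \ref{eq:chi_star_bound} then yields $\|\chi_*\|_\infty\leq\smfrac{p-1}{2}$. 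The correct bookkeeping is to split $\|\chi(z)\|_\infty\leq\|\chi(z)-\chi_*\|_\infty+\|\chi_*\|_\infty$, apply Poincar\'e only to the first term (which lies in the ball of radius $\rho_\chi\leq\Const(1)$ by Theorem \ref{th:microproblem}), and use Assumption 0 for the second; otherwise the constant in \eqref{eq:u0u_estimate} depends on $\chi_*$ itself and not only on $p$.
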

\begin{proof}
{\it Proof of (a)} consists in a direct application of the IFT to $(f,u)\mapsto \delE(u)-f$.
Assumption \eqref{eq:infsup_exact} guarantees the condition (ii) of the IFT; and by doing a straightforward calculation, similar to those in part (a) of Theorem \ref{th:microproblem}, one can show the necessary regularity of this map.
Finally, one should notice that $(0, \chi^\eps_*)\mapsto 0$.

{\it Proof of (b)}. It is a standard result (cf., e.g., \cite{OrtnerSuli2008}). The proof of all the statements except \eqref{eq:D2u0_estimate} again consists in a direct application of the IFT to $(f,u^0)\mapsto \delE^0(u^0)-f$ and in all way similar to the proof of (a).

To prove \eqref{eq:D2u0_estimate}, we use coercivity of the homogenized problem, \eqref{eq:infsup_homogenized_extended}.
For a fixed $x\in\calL$ choose $\theta\in{\rm conv}\{Du^0(x), Du^0(x+\eps)\}$ such that
$\delPhi^0(Du^0(x+\eps))-\delPhi^0(Du^0(x)) = \ddelPhi^0(\theta) (Du^0(x+\eps)-Du^0(x))$.
By construction $\rho_u\leq \rho_z$, hence $\ddelPhi^0(\theta)\geq c_0>0$, therefore
\[
c_0\, |D^2 u^0(x)|
\leq
D \delPhi^0(Du^0(x); x)
= -T f(x),
\]
where we used \eqref{eq:homogenized_strong}, 
which upon taking maximum over $x$ immediately yields \eqref{eq:D2u0_estimate}.

The possibility of choosing $\rho_u$ such that $u^\c \in B_{\calU_\#^{1,\infty}}(\chi^\eps_*,\rho_u)$ follows from $ |\chi(z)-\chi_*|_{1,\infty}<\rho_\chi$ for all $|z|<\rho_z$ which is guaranteed by Theorem \ref{th:microproblem}.

{\it Proof of \eqref{eq:ucu_estimate}.}

The first estimate in \eqref{eq:ucu_estimate} is the Poincar\'e inequality (see, e.g., \cite[Appendix A]{OrtnerSuli2008}), so we only need to prove the second estimate.
We start with using coercivity of $\delE$ and the fact that $u$ and $u^0$ are solutions to \eqref{eq:equilibrium} and \eqref{eq:homogenized}:
\[
c_0  |u^\c-u|_{1,\infty}
\leq
 |\ddelE(\theta) (u^\c-u)|_{-1,\infty}
=
 |\delE(u^\c)-\delE(u)|_{-1,\infty}
=
 |\delE(u^\c)-\delE^0(u^0)|_{-1,\infty}
,
\]
with some $\theta\in{\rm conv}\{u^\c, u\}\subset B_{\calU_\#^{1,\infty}}(\chi^\eps_*,\rho_u)$.
Thus we reduced the problem to estimating the consistency error, $ |\delE(u^\c)-\delE^0(u^0)|_{-1,\infty}$.

Compute $\delE(u^\c)$:
\begin{align}
\<\delE(u^\c), v\>_\calL
=~& \notag
\sum_{r=1}^R \<\delPhi^\eps_r(D_r u^\c), D_r v\>_\calL
\\ =~& \notag
\sum_{r=1}^R \big\<\delPhi^\eps_r\big(D_r u^0 + \eps D_{r}\chi^\eps(Du^0)\big), D_r v\big\>_\calL
\\ =~& \notag
\sum_{r=1}^R \Big\<A_r^\transpose \delPhi_r\big(D_r u^0(x) + D_{y,r}\chi(Du^0(x); y)
\\ ~& \phantom{\displaystyle \sum_{r=1}^R \Big\<}
+ \eps D_{x,r}T_{y,r}\chi(Du^0(x); y) \,;\,y \big)\big|_{y=x/\eps}, D v(x)\Big\>_{x\in\calL}
\label{eq:Euc}
\end{align}
and $\delE^0(u^0)$:
\begin{align}
\<\delE^0(u^0), v\>_\calL
=~& \label{eq:E0u0}
\Big\<\Big\<\sum_{r=1}^R \delPhi_r\big(D u^0(x) + D_{y,r}\chi(D u^0(x); y)\,;\,y\big)\Big\>_{y\in\calP}, D v(x)\Big\>_{x\in\calL}.
\end{align}

Notice that $\chi(z)$ satisfies the equation $D_y^\transpose\big[\sum_{r=1}^R A_{y,r}^\transpose \delPhi_r(z + D_{y,r}\chi(z))\big]=0$, hence $\sum_{r=1}^R A_{y,r}^\transpose \delPhi_r(z + D_{y,r}\chi(z))$ is constant \wrt $y$, hence
\begin{align}
\sum_{r=1}^R A_{y,r}^\transpose \delPhi_r(z + D_{y,r}\chi(z; y); y)\big|_{y=x/\eps}
=~& \notag
\Big\<\sum_{r=1}^R A_{y,r}^\transpose \delPhi_r(z + D_{y,r}\chi(z))\Big\>_\calP
\\ =~& \label{eq:E0u0_aux}
\Big\<\sum_{r=1}^R \delPhi_r(z + D_{y,r}\chi(z))\Big\>_\calP.
\end{align}

Thus, combining \eqref{eq:Euc}, \eqref{eq:E0u0}, and \eqref{eq:E0u0_aux} yields
\begin{align*}
~& \<\delE(u^\c) - \delE^0(u^0), v\>_\calL
\\ =~&
\Big\<\sum_{r=1}^R \big[
	A_r^\transpose \delPhi_r\big(D_r u^0(x) + D_{y,r}\chi\big(Du^0(x)\big) + \eps D_{x,r}T_{y,r}\chi\big(Du^0(x)\big) \big)
	\\ ~& \phantom{\displaystyle \Big\|\sum_{r=1}^R \big[}
	- A_{y,r}^\transpose \delPhi_r\big(D u^0(x) + D_{y,r}\chi\big(D u^0(x); y\big) \,;\, y\big)
\big]_{y=x/\eps}, Dv(x)\Big\>_{x\in\calL}
\\ =:~&
\Big\<\sum_{r=1}^R \calE_r, Dv\Big\>_\calL,
\end{align*}
and hence $ |\delE(u^\c) - \delE^0(u^0)|_{-1,\infty} = \big \|\sum_{r=1}^R \calE_r\big\|_{\infty}$.

In what follows we omit the arguments of $Du^0=Du^0(x)$, $\chi(Du^0) = \chi(Du^0(x); y)$, and $\delPhi_r(\bullet) = \delPhi_r(\bullet; y)$, and likewise we omit assigning $y=\smfrac x\eps$ before taking the $\calU^{0,\infty}(\calL)$--norm.

We thus estimate:
\begin{align*}
 \|\calE_r\|_{\infty}
=~&\big\|
	A_r^\transpose \delPhi_r\big(D_r u^0 + D_{y,r}\chi\big(Du^0\big) + \eps D_{x,r}T_{y,r}\chi\big(Du^0\big) \big)
\\~& \phantom{\displaystyle \big\|}
	- A_{y,r}^\transpose \delPhi_r\big(D u^0 + D_{y,r}\chi\big(D u^0\big)\big)
\big\|_{\infty}
\\ \leq~&\big\|
	A_r^\transpose \delPhi_r\big(D_r u^0 + D_{y,r}\chi\big(Du^0\big) + \eps D_{x,r}T_{y,r}\chi\big(Du^0\big) \big)
\\~& \phantom{\displaystyle \big\|}
	-
	A_r^\transpose \delPhi_r\big(D u^0 + D_{y,r}\chi\big(D u^0\big)\big)
\big\|_{\infty}
\\ ~&+\big\|
	A_r^\transpose \delPhi_r\big(D u^0 + D_{y,r}\chi\big(D u^0\big)\big)
	-
	A_{y,r}^\transpose \delPhi_r\big(D u^0 + D_{y,r}\chi\big(D u^0\big)\big)
\big\|_{\infty}
\\ =:~&  \|\calE_r^{(1)}\|_{\infty} +  \|\calE_r^{(2)}\|_{\infty},
\end{align*}

The first term is estimated as:
\begin{align*}
 \|\calE_r^{(1)}\|_{\infty}
=~&\big\|
	A_r^\transpose \delPhi_r\big(D_r u^0 + D_{y,r}\chi\big(Du^0\big) + \eps D_{x,r}T_{y,r}\chi\big(Du^0\big) \big)
\\~& \phantom{\displaystyle \big\|}
	-
	A_r^\transpose \delPhi_r\big(D u^0 + D_{y,r}\chi\big(D u^0\big)\big)
\big\|_{\infty}
\\ \leq~&\big\|
	\delPhi_r\big(D_r u^0 + D_{y,r}\chi\big(Du^0\big) + \eps D_{x,r}T_{y,r}\chi\big(Du^0\big) \big)
\\~& \phantom{\displaystyle \big\|}
	-
	\delPhi_r\big(D u^0 + D_{y,r}\chi\big(D u^0\big)\big)
\big\|_{\infty}
\\ \leq~& |\delPhi_r|_{\C^{0,1}}\, \big\|
	D_r u^0 - D u^0 + \eps D_{x,r}T_{y,r}\chi\big(Du^0\big)
\big\|_{\infty}
\\ \leq~& |\delPhi_r|_{\C^{0,1}}\, \big(
	\smfrac12 \eps(r-1)  |u^0|_{2,\infty} + \eps  |\chi|_{\C^{0,1}} |u^0|_{2,\infty}
\big)
\\ \leq~& \eps r  |\delPhi_r|_{\C^{0,1}}\, \Const\big(c_0^{-1} C_\Phi^{(1)}\big) |u^0|_{2,\infty}
,
\end{align*}
where in the second last step we used \eqref{eq:Dr-D_estimate} and $\|T_{y,r}\|_\infty \leq 1$.

To estimate the term with $\calE_r^{(2)}$ we use \eqref{eq:Ar_Ayr_estimate}:
\begin{align*}
 \|\calE_r^{(2)}\|_{\infty}
=~&\big\|
	A_r^\transpose \delPhi_r\big(D u^0 + D_{y,r}\chi\big(D u^0\big)\big)
	-
	A_{y,r}^\transpose \delPhi_r\big(D u^0 + D_{y,r}\chi\big(D u^0\big)\big)
\big\|_{\infty}
\\ \leq ~&
	\smfrac12\eps(r-1) \big\|D_x \delPhi_r\big(D u^0 + D_{y,r}\chi\big(D u^0\big)\big)
\big\|_{\infty}
\\ \leq ~& \smfrac12\eps(r-1)|\delPhi_r|_{\C^{0,1}}\, \big\|
	D_x ( D u^0 + D_{y,r}\chi(D u^0))
\big\|_{\infty}
\\ \leq ~& \smfrac12\eps(r-1) |\delPhi_r|_{\C^{0,1}}\, \big(|u^0|_{2,\infty}+\big\|
	D_x D_{y,r}\chi(D u^0) 
\big\|_{\infty} \big)
\\ \leq ~& \smfrac12\eps(r-1) |\delPhi_r|_{\C^{0,1}}\, \big(|u^0|_{2,\infty}+
	2  |\chi|_{\C^{0,1}} |u^0|_{2,\infty}
\big)
\\ \leq~& \eps r  |\delPhi_r|_{\C^{0,1}}\, \Const\big(c_0^{-1} C_\Phi^{(1)}\big) |u^0|_{2,\infty}
.
\end{align*}
Summing the estimates for $\calE_r^{(1)}$ and $\calE_r^{(2)}$ will yield the stated result; it only remains to notice that $C_\Phi^{(1)} \leq C_\Phi^{(1,1)}$ which implies that $C_\Phi^{(1)}$ can be absorbed into $C_\Phi^{(1,1)}$.

{\it Proof of \eqref{eq:u0u_estimate}} reduces to showing $ \|\chi^\eps(Du^0)\|_{\infty}\leq\Const(p)$, since $u^0-u = (u^\c-u) - \eps\chi^\eps(Du^0)$ and $ \|u^\c-u\|_{\infty}$ has been estimated in \eqref{eq:ucu_estimate}.
We have
\[
 \|\chi^\eps(z)\|_{\infty}
=
 \|\chi(z)\|_{\infty}
\leq
 \|\chi(z)-\chi_*\|_{\infty}
+
 \|\chi_*\|_{\infty}
,
\]
where the first term can be estimated with the help of the Poincar\'e inequality 
and Theorem \ref{th:microproblem}:
$ \|\chi-\chi_*\|_{\infty}
\leq \smfrac p2
 |\chi-\chi_*|_{1,\infty} 
<\smfrac p2 \rho_\chi$.

To estimate the second term, recall that due to Assumption 0, $y+\chi_*(y)$ is strictly increasing, hence $D \chi_*(y)\geq -1$ for all $y\in\bbZ$, hence using Lemma \ref{eq:chi_star_bound} we estimate $ \|\chi_*\|_{\infty} \leq \smfrac{p-1}2$.
The estimate \eqref{eq:u0u_estimate} is thus proved.
\end{proof}

\begin{lemma}\label{eq:chi_star_bound}
Let $w\in\calU_\#(\calP)$ be such that $D w(y) \geq -1$ for all $y\in\bbZ$.
Then $ \|w\|_{\infty} \leq \smfrac{p-1}2$.
\end{lemma}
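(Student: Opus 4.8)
The plan is to exploit the fact that the hypothesis $Dw(y)\geq -1$ is a one-sided Lipschitz bound. Since the lattice spacing of $\calP$ is $1$, it reads $w(y+1)-w(y)\geq -1$, i.e.\ $w$ may increase arbitrarily as $y$ advances but can decrease by at most $1$ per step. Iterating this along a forward path of $k$ consecutive steps from any node $y^*$ to any node $y$ gives $w(y)\geq w(y^*)-k$, where $k\in\{0,1,\dots,p-1\}$ is the forward distance from $y^*$ to $y$ (taken modulo $p$, using $p$-periodicity). The zero-average constraint $\<w\>_\calP=0$, equivalently $\sum_{y\in\calP} w(y)=0$, will then convert this pointwise propagation into a two-sided uniform bound.

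First I would bound the maximum. Let $y^*$ be a maximizer of $w$ and set $M:=w(y^*)=\max_y w(y)$. Applying the propagation estimate to every node and noting that, as $y$ ranges over one full period, the forward distance $k$ ranges bijectively over $\{0,1,\dots,p-1\}$, I would sum over the period:
\[
0=\sum_{y\in\calP} w(y)\geq \sum_{k=0}^{p-1}(M-k)=pM-\smfrac{p(p-1)}{2},
\]
which rearranges to $M\leq \smfrac{p-1}{2}$. A symmetric computation controls the minimum: for a minimizer $y_*$ with $m:=w(y_*)=\min_y w(y)$ and any node $y$, the forward path from $y$ to $y_*$ of length $\ell\in\{0,\dots,p-1\}$ gives $w(y_*)\geq w(y)-\ell$, i.e.\ $w(y)\leq m+\ell$. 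Summing over the period and again using $\sum_{y\in\calP} w(y)=0$ yields
\[
0=\sum_{y\in\calP} w(y)\leq \sum_{\ell=0}^{p-1}(m+\ell)=pm+\smfrac{p(p-1)}{2},
\]
hence $m\geq -\smfrac{p-1}{2}$. Combining the two estimates, $\|w\|_{\infty}=\max(M,-m)\leq \smfrac{p-1}{2}$, which is the claim.

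The one point that needs care—and the only real obstacle—is that the hypothesis constrains the slope on one side only, so the lower bound on $w$ cannot be obtained by "propagating downward'' from the minimum. Instead it must be read as an upper bound on the values at all other nodes relative to the minimum, propagated in the same forward orientation; once the orientation is fixed correctly, the averaging step closes both inequalities. The degenerate case $M=m$ forces $w\equiv 0$ through the zero-average condition and is covered automatically by the same computation, so no separate argument is needed.
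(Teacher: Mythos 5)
Your proof is correct, but it takes a genuinely different route from the paper's. The paper's proof rests on the representation formula
\[
w(y) \;=\; \sum_{k=1}^p \big(c-\smfrac{k}{p}\big)\, D w(y-k),
\]
valid for every $c\in\bbR$ (the $c$-term vanishes by periodicity, and the remainder is a summation-by-parts identity using $\<w\>_\calP=0$); choosing $c=1$ makes every coefficient nonnegative, so $Dw\geq -1$ gives the pointwise lower bound $w(y)\geq -\smfrac{p-1}{2}$, while choosing $c=\smfrac1p$ makes every coefficient nonpositive and gives $w(y)\leq \smfrac{p-1}{2}$. You instead propagate the one-sided bound $w(y+1)\geq w(y)-1$ along forward paths anchored at extremizers and close both inequalities by summing over one period and invoking the zero average. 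Both arguments use exactly the same three ingredients (one-sided difference bound, periodicity, zero mean) and are equally elementary. Yours is more transparent about where the constant comes from: $\smfrac{p-1}{2}$ appears as the average of the forward distances $0,1,\dots,p-1$; and your observation about orientation — that the bound at the minimum must be obtained from paths \emph{ending} at the minimizer rather than propagating downward from it — is precisely the right way to handle the one-sidedness of the hypothesis. The paper's identity, by contrast, delivers both bounds pointwise from a single formula by varying the free constant, at the cost of having to produce (and, strictly speaking, verify) the identity itself.
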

\begin{proof}
We use the following representation of $w$:
\[
w(y) = \sum_{k=1}^p \big(c-\smfrac{k}{p}\big)\, D w(y-k)
,
\]
which is valid for all $c\in\bbR$.
Choose $c=1$ and estimate
\[
w(y) \geq \sum_{k=1}^p \big(1-\smfrac{k}{p}\big)\, (-1) = -\smfrac{p-1}{2}.
\]
Likewise choose $c=\smfrac1p$ and obtain the upper bound $w(y) \leq \smfrac{p-1}2$.
\end{proof}

\section{Proof of the main results}\label{sec:analysis_hqc}
In this section we prove the a posteriori and a priori error estimates.

\subsection{A Posteriori Analysis}

In order to apply our regularity results to the coarse-grained equations, we will make use of the following conjugate operator $\calI_h^*:\calU\to\calU$ as
\begin{equation}\label{eq:Istar}
\<\calI_h^* w, v\>_\calL := \<w, \calI_h v\>_h
\quad\forall v,w\in\calU.
\end{equation}
Note that $\calI_h^* w$ is supported on the nodes of the triangulation $\calN_h$ for all $w\in\calU$, and the action of $\calI_h^*$ on $w\in\calU$ can be described as distributing values of $w$ from the interior of the intervals $T\in\calT_h$ to their endpoints.

\begin{lemma}[The formulation equivalent to coarse-graining] \label{lem:magic}
The coarse-grained problem \eqref{eq:coarse_equation} is equivalent to the following (fully atomistic) problem
\begin{equation}
\label{eq:coarse_equiv_equation}
\text{find $u\in\calU_\#$ s.t.:}\qquad
\<\delE^0(u), v\>_\calL = \<\calI_h^* F^h, v\>_\calL
\quad \forall v\in\calU_\#^{1,1}(\calL)
.
\end{equation}
\end{lemma}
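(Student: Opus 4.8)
The plan is to prove the claimed equivalence by splitting it into two assertions: (A) for any $u\in\calU_{h,\#}(\calL)$, the function $u$ solves the coarse-grained problem \eqref{eq:coarse_equation} if and only if it solves the atomistic problem \eqref{eq:coarse_equiv_equation}; and (B) every solution of \eqref{eq:coarse_equiv_equation} already belongs to $\calU_{h,\#}(\calL)$. Since any solution of \eqref{eq:coarse_equation} lies in $\calU_{h,\#}$ by construction, these two assertions together yield both inclusions of the solution sets and hence the equivalence. Throughout I will use that $\<\delE^0(u),v\>_\calL=\<\delPhi^0(Du),Dv\>_\calL$, that $\calI_h$ acts as the identity on $\calU_h$ and agrees with any $v$ at the nodes $\calN_h$, and that $\calI_h^* F^h$ is supported on $\calN_h$ (as noted after \eqref{eq:Istar}).

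For (A) the idea is a test-function reduction. If $u\in\calU_h$, then by the characterization \eqref{eq:Uh_characterization} the derivative $Du$, and therefore $g:=\delPhi^0(Du)$, is constant on each element, so $D^\transpose g=-T^{-1}Dg$ is supported on $\calN_h$. Consequently, for every $v\in\calU_\#^{1,1}(\calL)$ one may replace $v$ by its interpolant: using $\<g,Dv\>_\calL=\<D^\transpose g,v\>_\calL$ together with the fact that $v$ and $\calI_h v$ coincide on $\calN_h$, one obtains $\<\delE^0(u),v\>_\calL=\<\delE^0(u),\calI_h v\>_\calL$, while the definition \eqref{eq:Istar} gives $\<\calI_h^* F^h,v\>_\calL=\<F^h,\calI_h v\>_h$. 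Both identities then become statements about $v_h:=\calI_h v\in\calU_h^{1,1}$. The discrepancy that $v_h$ need not have zero mean is removed by passing to $\calI_\# v_h\in\calU_{h,\#}^{1,1}$ and invoking, on the one hand, $\<\delE^0(u),1\>_\calL=0$ and, on the other, the normalization $\<F^h,1\>_h=0$. In this way, testing \eqref{eq:coarse_equation} against $\calU_{h,\#}^{1,1}$ is seen to be equivalent to testing \eqref{eq:coarse_equiv_equation} against all of $\calU_\#^{1,1}$; in particular the ``only if'' direction follows at once since $\calI_h v_h=v_h$ for $v_h\in\calU_{h,\#}^{1,1}$.

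Assertion (B) is the crux of the lemma and the step I expect to be the main obstacle. Writing \eqref{eq:coarse_equiv_equation} in strong form with $D^\transpose=-T^{-1}D$, a solution $u$ satisfies $D^\transpose\delPhi^0(Du)=\calI_h^* F^h$ in $\calU_\#$. Since the right-hand side vanishes off $\calN_h$, evaluating at any $\xi\in\calL\setminus\calN_h$ yields $\delPhi^0\big(Du(\xi)\big)=\delPhi^0\big(Du(\xi-\eps)\big)$. The nonlinear part of the argument is to upgrade this to an identity for $Du$ itself: because $\Phi^0$ is defined only on $(-\rho_z,\rho_z)$, where $\ddelPhi^0\geq c_0>0$ by \eqref{eq:infsup_homogenized_extended}, the map $\delPhi^0$ is strictly increasing, hence injective, so $Du(\xi)=Du(\xi-\eps)$ for all $\xi\notin\calN_h$. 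By the characterization \eqref{eq:Uh_characterization} this is exactly the statement $u\in\calU_h$, and since $u\in\calU_\#$ we conclude $u\in\calU_{h,\#}$, completing the proof. The monotonicity-based injectivity of $\delPhi^0$ is what makes the lemma work: it is the only place where the analytic input (coercivity) enters, everything else being the algebra of interpolation and summation by parts.
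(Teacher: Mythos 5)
Your proof is correct and takes essentially the same approach as the paper: the paper's componentwise matching in the basis $\{w_\xi\}_{\xi\in\calL\setminus\calN_h}\cup\{w^h_\xi\}_{\xi\in\calN_h}$ is precisely your split into the nodal/coarse equations (your part (A), handled through the identity $\<\calI_h^* F^h, v\>_\calL = \<F^h,\calI_h v\>_h$) and the off-node equations (your part (B)). In particular, the crux is identical in both: testing off the nodes yields $\delPhi^0\big(Du(\xi-\eps)\big)=\delPhi^0\big(Du(\xi)\big)$, and strict monotonicity of $\delPhi^0$ coming from the coercivity \eqref{eq:infsup_homogenized_extended} upgrades this to $Du(\xi-\eps)=Du(\xi)$, i.e., $u\in\calU_h$ by \eqref{eq:Uh_characterization}.
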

\begin{proof}
Using the fact that the functions $w_\xi$ for $\xi\in\calL\setminus\calN_h$, together with $w^h_\xi$ for $\xi\in\calN_h$, form a basis of $\calU(\calL)$, rewrite \eqref{eq:coarse_equation} and \eqref{eq:coarse_equiv_equation} as, respectively,
\begin{subeqnarray}\label{eq:coarse_equation_alt}
	\text{find $u\in\calU$ s.t.:}\quad & &
	u\in\calU_h
\slabel{eq:coarse_equation_alt_one}
\\
	& &
	\<\delE^0(u), w^h_\xi\>_\calL = \<F^{h}, w^h_\xi\>_h
	\quad \forall \xi\in\calN_h
\slabel{eq:coarse_equation_alt_two}
\\
	& &
	\<u\>_\calL=0
,
\slabel{eq:coarse_equation_alt_three}
\end{subeqnarray}
and
\begin{subeqnarray}\label{eq:coarse_equiv_equation_alt}
	\text{find $u\in\calU$ s.t.:}\quad & &
	\<\delE^0(u), w_\xi\>_\calL = \<\calI_h^* F^h, w_\xi\>_\calL
	\quad \forall \xi\in\calL\setminus\calN_h
\slabel{eq:coarse_equiv_equation_alt_one}
\\
	& &
	\<\delE^0(u), w^h_\xi\>_\calL = \<\calI_h^* F^h, w^h_\xi\>_\calL
	\quad \forall \xi\in\calN_h
\slabel{eq:coarse_equiv_equation_alt_two}
\\
	& &
	\<u\>_\calL=0
.
\slabel{eq:coarse_equiv_equation_alt_three}
\end{subeqnarray}
The equations \eqref{eq:coarse_equation_alt_three} and \eqref{eq:coarse_equiv_equation_alt_three} are identical. The equations \eqref{eq:coarse_equation_alt_two} and \eqref{eq:coarse_equiv_equation_alt_two} are also equivalent since $\<\calI_h^* F^h, w^h_\xi\>_\calL = \<F^{h}, \calI_h w^h_\xi\>_h = \<F^{h}, w^h_\xi\>_h$.
It thus remains to prove equivalence of \eqref{eq:coarse_equation_alt_one} and \eqref{eq:coarse_equiv_equation_alt_one}.

Fix $\xi\in\calL\setminus\calN_h$.
The right-hand side of \eqref{eq:coarse_equiv_equation_alt_one} is zero, since $\calI_h w_\xi = 0$ and hence
\[
\<\calI_h^* F^h, w_\xi\>_\calL = \<F^h, \calI_h w_\xi\>_h = 0.
\]

Evaluate the left-hand side of \eqref{eq:coarse_equiv_equation_alt_one}:
\[
0 = 
\<\delE^0(u), w_\xi\>_\calL
=
\<\delPhi^0(D u), D w_\xi\>_\calL
,
\]
which in coordinate notation reads
\begin{equation}\label{eq:magic_lemma_Dtildeu}
\delPhi^0\big(D u(\xi-\eps)\big) = \delPhi^0\big(D u(\xi)\big)
.
\end{equation}
Since $\Phi^0$ is convex (cf.\ \eqref{eq:infsup_homogenized}), \eqref{eq:magic_lemma_Dtildeu} is equivalent to $D u(\xi-\eps)=D u(\xi)$.
Since $\xi\in\calL\setminus\calN_h$ was arbitrary, it is further equivalent to $u\in\calU_h$ (cf.\ \eqref{eq:Uh_characterization}).
\end{proof}

%

Lemma \ref{lem:magic} motivates us to introduce the following auxiliary problem
\begin{equation} \label{eq:aux_equation}
\text{find $u^\aux\in\calU_\#$ s.t.:}\qquad
\<\delE(u^\aux), v\>_\calL = \<\calI_h^* F^h, v\>_\calL
\quad \forall v\in\calU_\#^{1,1}(\calL).
\end{equation}
We can then apply Theorem \ref{th:macroproblem} to \eqref{eq:coarse_equation} and \eqref{eq:aux_equation} and immediately obtain the following intermediate result:
\begin{proposition}\label{prop:HQC_intermediate}
For all $\calI_h^* F^h\in B_{\calU_\#^{-1,\infty}}(0,\rho_f)$, the solution $u^\aux$ to \eqref{eq:aux_equation} and the solution $u_h^0$ to \eqref{eq:coarse_equation} both exist and are unique in $B_{\calU_\#^{1,\infty}}(\chi^\eps_*,\rho_u)$ and $B_{\calU_\#^{1,\infty}}(0,\rho_u)$, respectively.
Moreover, the respective Lipschitz bounds $u^\aux=u^\aux(\calI_h^* F^h)\in\C^{1,1}$ and $u_h^0=u_h^0(\calI_h^* F^h)\in\C^{1,1}$, and the estimates
\begin{align}
\label{eq:u_aux_estimate}
 |u^\aux-u|_{1,\infty}
\leq~&
c_0^{-1}  |\calI_h^* F^h-f|_{-1,\infty},
\\ \label{eq:u_0_h_estimate}
 |u^0_h|_{2,\infty}
\leq~&
c_0^{-1}  \|\calI_h^* F^h\|_{\infty},
\\ \label{eq:uhc_u_aux_estimate}
 |u^\c_h-u^\aux|_{1,\infty}
\leq~&
\eps\,\Const\big(c_0^{-1} C_\Phi^{(1,1)}\big)  |u^0_h|_{2,\infty},
\end{align}
hold where $u^\c_h$ is defined in \eqref{eq:corr_num}.
\end{proposition}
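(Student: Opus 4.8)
The plan is to observe that the whole proposition follows by applying Theorem~\ref{th:macroproblem} to two problems that are posed with the \emph{same} external force $\calI_h^* F^h$. First I would note that the auxiliary equation \eqref{eq:aux_equation} is literally the atomistic equilibrium \eqref{eq:equilibrium} with $f$ replaced by $\calI_h^* F^h$, and that, by Lemma~\ref{lem:magic}, the coarse-grained equation \eqref{eq:coarse_equation} is equivalent to the fully atomistic problem \eqref{eq:coarse_equiv_equation}, which is precisely the homogenized equation \eqref{eq:homogenized} with $f$ replaced by $\calI_h^* F^h$. Thus $u^\aux$ and $u^0_h$ are nothing but the atomistic and homogenized solutions evaluated at the force $\calI_h^* F^h$, and the pair $(u^0_h, u^\aux)$ plays exactly the role that $(u^0, u)$ played in Theorem~\ref{th:macroproblem}.

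With this identification, existence and uniqueness in the respective balls $B_{\calU_\#^{1,\infty}}(\chi^\eps_*,\rho_u)$ and $B_{\calU_\#^{1,\infty}}(0,\rho_u)$, together with the $\C^{1,1}$ dependence on $\calI_h^* F^h$, follow directly: for $u^\aux$ from Theorem~\ref{th:macroproblem}(a), and for $u^0_h$ from Theorem~\ref{th:macroproblem}(b), both invoked for the force $\calI_h^* F^h \in B_{\calU_\#^{-1,\infty}}(0,\rho_f)$ assumed in the hypothesis. The estimate \eqref{eq:u_0_h_estimate} is then exactly \eqref{eq:D2u0_estimate} read off at the force $\calI_h^* F^h$, and \eqref{eq:uhc_u_aux_estimate} is exactly \eqref{eq:ucu_estimate} of Theorem~\ref{th:macroproblem}(c) under the above identification, with $u^\c_h$ (defined in \eqref{eq:corr_num}) playing the role of $u^\c$.

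For the Lipschitz estimate \eqref{eq:u_aux_estimate} I would use that both $u = u(f)$ and $u^\aux = u(\calI_h^* F^h)$ arise from the single solution map of Theorem~\ref{th:macroproblem}(a), whose derivative satisfies $\|\del_f u\|_\C \leq c_0^{-1}$. Integrating this derivative along the straight segment joining $f$ to $\calI_h^* F^h$, which lies in the convex ball $B_{\calU_\#^{-1,\infty}}(0,\rho_f)$, the mean value inequality yields $|u^\aux - u|_{1,\infty} \leq c_0^{-1}\, |\calI_h^* F^h - f|_{-1,\infty}$.

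The argument is essentially bookkeeping: the analytic content has already been established in Theorem~\ref{th:macroproblem} and Lemma~\ref{lem:magic}. The point that requires the most care is the \emph{correct matching of test spaces and forces}: the coarse-grained equation \eqref{eq:coarse_equation} is tested against $\calU_{h,\#}^{1,1}(\calL)$, whereas the homogenized equation \eqref{eq:homogenized} with force $\calI_h^* F^h$ is tested against the larger $\calU_\#^{1,1}(\calL)$, and it is exactly Lemma~\ref{lem:magic} that certifies that these two problems share the same solution $u^0_h$. A secondary point is that the mean value inequality in \eqref{eq:u_aux_estimate} requires both $f$ and $\calI_h^* F^h$ to lie in $B_{\calU_\#^{-1,\infty}}(0,\rho_f)$, so that the solution map of Theorem~\ref{th:macroproblem}(a) is defined along the entire connecting segment.
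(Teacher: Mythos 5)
Your proposal is correct and follows essentially the same route as the paper: the paper's own ``proof'' is precisely the observation that, via Lemma~\ref{lem:magic}, both \eqref{eq:aux_equation} and \eqref{eq:coarse_equation} are instances of \eqref{eq:equilibrium} and \eqref{eq:homogenized} with force $\calI_h^* F^h$, so Theorem~\ref{th:macroproblem} applies verbatim. Your fleshing out of the details (the test-space matching via Lemma~\ref{lem:magic}, and the mean value argument from $\|\del_f u\|_\C \leq c_0^{-1}$ for \eqref{eq:u_aux_estimate}) is exactly the intended application.
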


It remains to further estimate the respective quantities in Proposition \ref{prop:HQC_intermediate}.

First, we notice that $\eps  |u^0_h|_{2,\infty}$ is nothing but the standard error indicator with jumps over elements.
Indeed, for an arbitrary $u_h\in\calU_{h,\#}$, we have
\begin{equation} \label{eq:aposteriori-u2}
 |u_h|_{2,\infty}
=
\max_{x\in\calL} |D^2 u_h(x)|
=
\max_{x\in\calN_h} |D^2 u_h(x-\eps)|
=
\smfrac1\eps \max_{x\in\calN_h} |D u_h(x)-D u_h(x-\eps)|
.
\end{equation}

Second, we split
\begin{align} \notag
 |\calI_h^* F^h-f|_{-1,\infty}
 \leq~&
 |\calI_h^* F^h-\calI_h^* f^h|_{-1,\infty} +  |\calI_h^* f^h-f|_{-1,\infty}
 \\ \label{eq:aposteriori-fall}
 =~&
	 \max_{ |v_h|_{1,1}=1} |\<F^h, v_h\>_h - \<f, v_h\>_\calL|
	  +  |\calI_h^* f^h-f|_{-1,\infty}
.
\end{align}
Here the first term indicates how well $F^h$ approximates the action of exact force $f$ on the finite element space $\calU_{h,\#}$. We estimate the second term using Lemma \ref{lem:f_v_Ihv_estimate}:
\begin{equation} \label{eq:aposteriori-f2}
\<\calI_h^* f,v\>_\calL-\<f,v\>_\calL
=
\<f,\calI_h v\>_\calL-\<f,v\>_\calL
=
\<f,\calI_h v - v\>_\calL
\leq
 \|(h-\eps) f\|_{\infty} |v|_{1,\infty}
.
\end{equation}

\begin{lemma}\label{lem:f_v_Ihv_estimate}
\[
\<f, v-\calI_h v\>_\calL \leq  \|(h-\eps) f\|_{\infty} |v|_{1,\infty}
\qquad \forall f\in\calU_\#,~\forall v\in\calU.
\]
where $h=h(x)$ is defined by \eqref{eq:h_def}.
\end{lemma}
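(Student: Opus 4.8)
The plan is to reduce the pairing to a single sum over mesh elements of $D$ applied to the interpolation error, by a discrete summation by parts, and then to exploit two structural facts: the error vanishes at the nodes, and $D$ of the error has zero mean on each element. Set $e := v - \calI_h v$. Since $\calI_h$ is the nodal interpolant, $e(\xi)=0$ for every node $\xi\in\calN_h$; moreover on each element $T=\calL\cap[\xi,\eta)$ the function $\calI_h v$ is affine with slope equal to the element average $s_T := \frac{v(\eta)-v(\xi)}{h_T}=\frac{\eps}{h_T}\sum_{x\in T}Dv(x)$, so that $De = Dv - s_T$ on $T$ and in particular $\eps\sum_{x\in T}De(x)=e(\eta)-e(\xi)=0$. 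I would then write $\<f,v-\calI_h v\>_\calL = \sum_{T\in\calT_h}\eps\sum_{x\in T} f(x)\,e(x)$ and handle each element separately.

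On a fixed $T$, since $e$ vanishes at both endpoints I would use the discrete fundamental theorem of calculus $e(x)=\eps\sum_{x'\in T,\,x'<x}De(x')$ and interchange the order of the two summations. This gives $\eps\sum_{x\in T} f(x)e(x)=\eps\sum_{x\in T} De(x)\,S_T(x)$, where $S_T(x):=\eps\sum_{x'\in T,\,x'>x} f(x')$ is the tail sum of $f$ over $T$. Because this tail contains at most $h_T/\eps-1$ lattice points, $|S_T(x)|\le (h_T-\eps)\max_{x\in T}|f(x)|\le \|(h-\eps)f\|_\infty$. Summing over all elements and pulling out this uniform bound yields
\[
|\<f,v-\calI_h v\>_\calL|\le \|(h-\eps)f\|_\infty\sum_{T\in\calT_h}\eps\sum_{x\in T}|De(x)|=\|(h-\eps)f\|_\infty\,\|De\|_1 .
\]

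It then remains to show $\|De\|_1\le |v|_{1,\infty}$. Here I would use $De=Dv-s_T$ on each $T$ together with $\sum_{T}h_T=1$. The delicate point, which I expect to be the main obstacle, is getting the stated constant \emph{exactly} $1$: the crude triangle inequality $\eps\sum_{x\in T}|De(x)|\le \eps\sum_{x\in T}|Dv(x)|+h_T|s_T|\le 2\,\eps\sum_{x\in T}|Dv(x)|$ only produces the estimate with a factor $2$. To recover the sharp constant I would instead bound the mean absolute deviation of $Dv$ about its element average by half its oscillation,
\[
\eps\sum_{x\in T}|Dv(x)-s_T|\le \smfrac12 h_T\big(\max_{x\in T}Dv(x)-\min_{x\in T}Dv(x)\big)\le h_T\,\|Dv\|_\infty ,
\]
which follows from the fact that the positive and negative deviations from the mean have equal total, so the extremal configuration for fixed range and mean is a balanced two-point one. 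Summing over elements then gives $\|De\|_1\le \|Dv\|_\infty\sum_{T}h_T=|v|_{1,\infty}$, which completes the argument. As a consistency check, on a fully refined mesh each element is a single point, whence $e\equiv 0$, $h-\eps\equiv 0$, and both sides vanish.
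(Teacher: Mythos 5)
Your proof is correct, but it takes a genuinely different route from the paper's. The paper works directly with the interpolation error $e=v-\calI_h v$ itself: it bounds $|e(x)|\le \eps\sum_{x'\in T}|Dv(x')|$ pointwise at interior points of each element (error at a point is at most the total variation of $v$ over the element), notes $e=0$ at the nodes, and obtains the factor $h_T-\eps$ by counting the interior points; this in fact yields the slightly stronger bound $\<f,v-\calI_h v\>_\calL\le \|(h-\eps)f\|_{\infty}\,|v|_{1,1}$, from which the stated estimate follows since $|v|_{1,1}\le|v|_{1,\infty}$. You instead perform an Abel summation per element, so the factor $h_T-\eps$ comes from the tail sums $S_T$ of $f$, and the burden shifts to the $\ell^1$ bound $\|De\|_1\le \|Dv\|_\infty$, which you obtain by recognizing $De$ as the deviation of $Dv$ from its element mean and invoking the fact that the mean absolute deviation is at most half the range. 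Each route has its merits: the paper's is shorter and delivers the $|v|_{1,1}$ refinement, while your intermediate bound $\|(h-\eps)f\|_\infty\|De\|_1$ vanishes whenever $v\in\calU_h$, a feature the paper's bound lacks, and you correctly identify where the sharp constant $1$ (rather than $2$) must come from. One small point: the half-range inequality for the mean absolute deviation is stated with only a heuristic justification; it deserves a one-line proof, e.g.\ writing $\sum_i|a_i-\bar a|=2S$ with $S=\sum_{a_i\ge\bar a}(a_i-\bar a)=\sum_{a_i<\bar a}(\bar a-a_i)$, so that
\begin{equation*}
2S\le 2\sqrt{n_+ n_-}\,\sqrt{(\max_i a_i-\bar a)(\bar a-\min_i a_i)}\le \smfrac{n}{2}\big(\max_i a_i-\min_i a_i\big)
\end{equation*}
by AM--GM; with that added, the argument is complete.
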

\begin{proof}
We have
\begin{align*}
\<f, v-\calI_h v\>_\calL
=~&
\eps \sum_{T\in\calT_h} \sum_{x\in T} f(x) [v-\calI_h v](x)
\\ \leq~&
\eps \sum_{T\in\calT_h} \max_{x\in T} |f(x)| \sum_{x\in T} \big|[v-\calI_h v](x)\big|.
\end{align*}

Fix $T\in\calT_h$, let $\xi$ and $\eta$ ($\xi<\eta$) be the two endpoints of $T$, and estimate, for $\xi<x<\eta$,
\begin{align*}
\big| [v-\calI_h v](x) \big|
=~&
\big| v(x) - \smfrac{\eta-x}{\eta-\xi} v(\xi) - \smfrac{x-\xi}{\eta-\xi} v(\eta) \big|
\\ =~&
\big| \smfrac{\eta-x}{\eta-\xi} (v(x) - v(\xi)) - \smfrac{x-\xi}{\eta-\xi} (v(\eta)-v(x)) \big|
\\ \leq~&
|v(x) - v(\xi)| + |v(\eta)-v(x)|
\\ \leq~&
\sum_{x'\in\calL\cap[\xi,x)} |\eps Dv(x')|
+
\sum_{x'\in\calL\cap[x,\eta)} |\eps Dv(x')|
= \eps \sum_{x'\in T} |Dv(x')|
.
\end{align*}
If $x=\xi$ then obviously $[v-\calI_h v](x)=0$.

Thus,
\begin{align*}
\<f, v-\calI_h v\>_\calL
\leq~&
\eps \sum_{T\in\calT_h} \max_{x\in T} |f(x)|
	\sum_{x\in \interior(T)} \eps \sum_{x'\in T} |Dv(x')|
\\=~&
\eps \sum_{T\in\calT_h} \max_{x\in T} |f(x)|
	(h_T-\eps) \sum_{x'\in T} |Dv(x')|
\\ \leq ~&
 \|(h-\eps) f\|_{\infty} \, \eps \sum_{T\in\calT_h} \sum_{x'\in T} |Dv(x')|
.
\end{align*}
\end{proof}

\begin{proof}[Proof of Theorem \ref{th:aposteriori}]
Using \eqref{eq:uhc_u_aux_estimate} and \eqref{eq:u_aux_estimate} we can estimate
\begin{align*}
 |u^\c_h-u|_{1,\infty}
\leq~&
 |u^\c_h-u^\aux|_{1,\infty}
+
 |u^\aux-u|_{1,\infty}
\\ \leq~&
c_0^{-1}  |\calI_h^* F^h-f|_{-1,\infty}
+
\eps\,\Const\big(c_0^{-1} C_\Phi^{(1,1)}\big)  |u^0_h|_{2,\infty}
.
\end{align*}
The proof is then completed using relations \eqref{eq:aposteriori-u2}, \eqref{eq:aposteriori-fall}, \eqref{eq:aposteriori-f2}.
\end{proof}

\subsection{A Priori Estimate}
Recall that for the a priori error estimate we assume 
the exact summation of the external force, i.e., that $\<F^h,v_h\>_h = \<f, v_h\>_\calL$.
The a priori error estimate can essentially be obtained from the a posteriori estimate \eqref{eq:posteriori} using \eqref{eq:aposteriori-u2} and \eqref{eq:u_0_h_estimate}.
We only need to estimate $ |\calI_h^* f^h|_{-1,\infty}$ and $ \|\calI_h^* f^h\|_{\infty}$ (the former is needed to quantify the condition $\calI_h^* f^h\in B_{\calU_\#^{-1,\infty}}(0,\rho_f)$) in terms of $f$.
This is done in the following lemma.

\begin{lemma}
\label{lem:apriori}
\begin{align*}
 |\calI_h^* f^h|_{-1,\infty}
\leq~&
 |f|_{-1,\infty}.
\\
 \|\calI_h^* f^h\|_{\infty}
\leq~&
\smfrac1\eps  \|h f\|_{\infty}.
\end{align*}
\end{lemma}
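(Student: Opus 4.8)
The plan is to reduce both inequalities to properties of the adjoint interpolant $\calI_h^*$ defined in \eqref{eq:Istar}. Under the exact-summation hypothesis $\<F^h,v_h\>_h=\<f,v_h\>_\calL$ one has, for every $v\in\calU$, $\<\calI_h^*F^h,v\>_\calL=\<F^h,\calI_h v\>_h=\<f,\calI_h v\>_\calL$, so $\calI_h^*F^h=\calI_h^*f$; it therefore suffices to bound this single object (denoted $\calI_h^* f^h$ to match the statement), and I would use repeatedly the identity $\<\calI_h^*f,v\>_\calL=\<f,\calI_h v\>_\calL$.

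For the first estimate I would argue by duality. By the definition of the negative seminorm, $|\calI_h^*f|_{-1,\infty}=\sup_{|v|_{1,1}=1}\<\calI_h^*f,v\>_\calL=\sup_{|v|_{1,1}=1}\<f,\calI_h v\>_\calL$. Since $f\in\calU_\#$ annihilates constants, I may replace $\calI_h v$ by its mean-zero part $\calI_\#\calI_h v\in\calU_\#$ (see \eqref{eq:Ih_def}) without changing the pairing, and the duality bound then gives $\<f,\calI_h v\>_\calL\le|f|_{-1,\infty}\,|\calI_\#\calI_h v|_{1,1}=|f|_{-1,\infty}\,|\calI_h v|_{1,1}$. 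It remains to prove the $W^{1,1}$-stability $|\calI_h v|_{1,1}\le|v|_{1,1}$, which is where the structure of $\calI_h$ enters: on each element $T=\calL\cap[\xi,\eta)$ the nodal interpolant is affine, so $D\calI_h v$ is constant on $T$ and equals the element average $\tfrac1{n_T}\sum_{x\in T}Dv(x)$ of $Dv$, where $n_T=h_T/\eps$. Summing $|D\calI_h v|$ over $\calL$ and applying the triangle inequality element by element (the same $L^1$-contraction of averaging that underlies \eqref{eq:D_r_estimate}) yields $\|D\calI_h v\|_1\le\|Dv\|_1$. Taking the supremum over $v$ gives the first bound with constant one.

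For the second estimate I would make $\calI_h^*f$ explicit. Expanding $\calI_h v=\sum_{\xi\in\calN_h}v(\xi)\,w^h_\xi$ in the nodal basis and matching coefficients in $\<\calI_h^*f,v\>_\calL=\<f,\calI_h v\>_\calL$ shows that $\calI_h^*f$ is supported on $\calN_h$ with $\calI_h^*f(\xi)=\sum_{x\in\calL}f(x)\,w^h_\xi(x)$, only the two elements adjacent to $\xi$ contributing. I would then estimate pointwise using $0\le w^h_\xi\le1$ and the fact that each adjacent element $T$ distributes its mass to its two endpoints: $\bigl|\sum_{x\in T}f(x)w^h_\xi(x)\bigr|\le\sum_{x\in T}|f(x)|\le n_T\max_{x\in T}|f(x)|=\tfrac1\eps h_T\max_{x\in T}|f(x)|\le\tfrac1\eps\|hf\|_\infty$, with $h$ as in \eqref{eq:h_def}. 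Summing the contributions of the two elements meeting at $\xi$ and maximizing over $\xi\in\calN_h$ gives $\|\calI_h^*f\|_\infty\le\tfrac1\eps\|hf\|_\infty$, the stated constant following from the observation that the hat weights on each element sum to roughly $n_T/2$, so that the two half-masses recombine into a single factor $\tfrac1\eps\|hf\|_\infty$ rather than doubling.

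The duality manipulation and the hat-weight bookkeeping are routine. The genuinely delicate point is the second estimate: one must evaluate the adjoint interpolant explicitly at the nodes and track carefully how the mass $\sum_{x\in T}|f(x)|$ of each element splits between its two endpoints, so that the contributions of the two elements incident to a node combine with the stated constant instead of the naive factor $2$ obtained from bounding each element separately; controlling this recombination is the only step requiring care, the $W^{1,1}$-stability of $\calI_h$ in the first estimate being an immediate consequence of averaging being an $L^1$-contraction.
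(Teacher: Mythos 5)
Your proof of the first estimate is correct and coincides with the paper's own argument: a duality reduction to the $W^{1,1}$-stability $|\calI_h v|_{1,1}\le|v|_{1,1}$, which you prove, as the paper does, by noting that $D\calI_h v$ on each element is the average of $Dv$ over that element and that averaging contracts $\ell^1$ sums. Your explicit handling of the mean-zero adjustment via $\calI_\#$ is a point the paper leaves implicit, and is fine.

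The second estimate is where there is a genuine gap, and it sits exactly at the step you yourself flagged as delicate. Your formula $\calI_h^* f(\xi)=\sum_{x\in\calL}f(x)\,w^h_\xi(x)$ for $\xi\in\calN_h$ is correct (it is the pointwise form of the paper's argument, which instead bounds $\<\calI_h^*f^h,v\>_\calL$ by $\smfrac1\eps\|hf\|_\infty\|v\|_1$ and concludes by $\ell^\infty$--$\ell^1$ duality). But the recombination you invoke does not hold with constant $1$. On the element $T=\calL\cap[\xi,\eta)$ to the right of $\xi$ (the one containing the atom $\xi$) the hat weights sum to exactly $\smfrac{n_T+1}2$, where $n_T=h_T/\eps$, while on the element to the left they sum to $\smfrac{n_T-1}2$; they are not both ``roughly $n_T/2$'' in a usable sense. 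Since the sharp per-element bound on $f$ is $\max_{x\in T}|f(x)|\le\|hf\|_\infty/h_T$ with \emph{different} $h_T$ on the two sides, the combined factor at a node with right/left atom counts $n_+$, $n_-$ is
\[
\frac{n_++1}{2n_+}+\frac{n_--1}{2n_-}\;=\;1+\frac1{2n_+}-\frac1{2n_-},
\]
which exceeds $1$ whenever $n_+<n_-$ and tends to $\smfrac32$ when $n_+=1$ and $n_-\to\infty$. This loss is not an artifact of your bookkeeping: let $\xi_0$ be a node whose right element is the single atom $\{\xi_0\}$ and whose left element contains two atoms, and take $f=c$ at $\xi_0$, $f=c/2$ at the two atoms to its left, with a small negative compensation spread over single-atom elements elsewhere so that $\<f\>_\calL=0$. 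Then $h$ from \eqref{eq:h_def} gives $\|hf\|_\infty=\eps c$, yet $\calI_h^*f(\xi_0)=c\cdot 1+\smfrac c2\cdot\smfrac12=\smfrac54\,c>\smfrac1\eps\|hf\|_\infty$. So the inequality with constant exactly $1$ is false, and no bookkeeping can close your gap.

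You are, however, in good company: the paper's own proof stumbles at precisely the same place. Its inequality $\eps\sum_{x\in T}\big(\smfrac{\eta-x}{\eta-\xi}|v(\xi)|+\smfrac{x-\xi}{\eta-\xi}|v(\eta)|\big)\le h_T\big(\smfrac12|v(\xi)|+\smfrac12|v(\eta)|\big)$ is off by $\smfrac\eps2\big(|v(\xi)|-|v(\eta)|\big)$, because the left-hand side equals $\smfrac{h_T+\eps}2|v(\xi)|+\smfrac{h_T-\eps}2|v(\eta)|$. What both your argument and the paper's actually establish is $\|\calI_h^*f^h\|_\infty\le\smfrac3{2\eps}\|hf\|_\infty$, and that is all that is needed downstream: in Theorem \ref{th:apriori} this quantity is multiplied by a generic constant $\Const\big(c_0,C_\Phi^{(1,1)}\big)$ anyway, so the extra factor $\smfrac32$ is harmless. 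But as a proof of the lemma as stated, your final recombination step is a gap --- and an unfixable one, since the stated constant is not attainable.
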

\begin{proof}
To prove the first estimate, we need to prove the $\calU^{1,1}$ stability of $\calI_h$:
\begin{equation}\label{eq:interpolant-W11-stability}
 |\calI_h v|_{1,1} \leq  |v|_{1,1}.
\end{equation}
To prove it, start with expressing
\[
 |\calI_h v|_{1,1}
=
\eps \sum_{T\in\calT_h} \sum_{x\in T} |D \calI_h v(x)|
.
\]
Then fix $T\in\calT_h$, let $\xi$ and $\eta$ ($\xi<\eta$) be the two endpoints of $T$, and estimate
\[
\sum_{x\in T} |D \calI_h v(x)|
=
\sum_{x\in T}  \frac{|v(\eta)-v(\xi)|}{\eta-\xi}
=
|v(\eta)-v(\xi)|
=
\bigg|
	\sum_{x\in T} Dv(x)
\bigg|
\leq
	\sum_{x\in T} |Dv(x)|
.
\]
Hence \eqref{eq:interpolant-W11-stability} follows.

Now we can easily estimate $ |\calI_h^* f^h|_{-1,\infty}$:
\[
\<\calI_h^* f^h, v\>_\calL
=
\<f, \calI_h v\>_\calL
\leq  |f|_{-1,\infty}  |\calI_h v|_{1,1}
\leq  |f|_{-1,\infty}  |v|_{1,1},
\]
hence $ |\calI_h^* f^h|_{-1,\infty} \leq  |f|_{-1,\infty}$.

To derive the second estimate, we test $\calI_h^* f^h$ with an arbitrary $v\in\calU$:
\[
\<\calI_h^* f^h, v\>_\calL
=
\<f, \calI_h v\>_\calL
=
\eps \sum_{T\in\calT_h} \sum_{x\in T} f(x) [\calI_h v](x)
\leq
\eps \sum_{T\in\calT_h} \max_{x\in T} |f(x)| \sum_{x\in T} |\calI_h v|(x)
\]

Fix $T\in\calT_h$, let $\xi$ and $\eta$ ($\xi<\eta$) be the two endpoints of $T$, and estimate
\[
\eps \sum_{x\in T} |\calI_h v|(x)
\leq
\eps \sum_{x\in T} \big(\smfrac{\eta-x}{\eta-\xi} |v(\xi)| + \smfrac{x-\xi}{\eta-\xi} |v(\eta)|\big)
\leq
h_T \big(\smfrac12|v(\xi)| + \smfrac12 |v(\eta)|\big).
\]

Thus,
\begin{align*}
\<\calI_h^* f^h, v\>_\calL
\leq~&
\sum_{T\in\calT_h} \max_{x\in T} |f(x)| h_T \big(\smfrac12|f(\xi)| + \smfrac12 |v(\eta)|\big)
\\ \leq~&
 \|h f\|_{\infty} \sum_{T\in\calT_h} \big(\smfrac12|f(\xi)| + \smfrac12 |v(\eta)|\big)
\\ =~&
 \|h f\|_{\infty} \sum_{x\in\calN_h} |v(x)|
\leq
 \|h f\|_{\infty} \sum_{x\in\calL} |v(x)|
=
\smfrac1\eps  \|h f\|_{\infty}  \|v\|_{1}.
\end{align*}
\end{proof}

The first estimate of the above lemma means that $f\in B_{\calU_\#^{-1,\infty}}(0,\rho_f)$ implies $\calI_h^* f^h\in B_{\calU_\#^{-1,\infty}}(0,\rho_f)$.
\begin{proof}[Proof of Theorem \ref{th:apriori}]
Follows from  \eqref{eq:posteriori} using \eqref{eq:aposteriori-u2}, \eqref{eq:u_0_h_estimate}, and Lemma \ref{lem:apriori}.
\end{proof}

\section{Numerical Examples}\label{sec:numeric}

We solve numerically several model problems to illustrate the performance of HQC.
We consider a nonlinear one-dimensional model problem (Section \ref{sec:numeric:1d}), followed by a two-dimensional linear problem (Section \ref{sec:numeric:2d}).

The aim of the numerical experiments is twofold.
First, we verify numerically the sharpness of the obtained error for the 1D case.
Second, we confirm that the HQC convergence result obtained for 1D is valid in higher dimensions.

\subsection{1D}\label{sec:numeric:1d}

In the first numerical example we solve the problem \eqref{eq:equilibrium} with the period of spatial oscillation $p=2$ and number of interacting neighbors $R=3$.
The interaction potential is chosen as the Lennard-Jones potential
\[
\Phi^\eps_r(z; x)
 = -2 \big(\smfrac{z}{l_{x/\eps}}\big)^{-6} + \big(\smfrac{z}{l_{x/\eps}}\big)^{-12}
\quad (1\le r\le R)
\]
with the varying equilibrium distance
\[
l_{y} = \left\{
	\begin{array}{lcl}
	1 & & \textnormal{$y$ is even} \\
	9/8 & & \textnormal{$y$ is odd.}
	\end{array}
\right.
\]
The number of atoms is $N=2^{14}=16384$, and the external force is taken as
\[
f(x) = 50 \sin\left(1+2\pi x\right).
\]

\begin{figure}
\begin{center}
\hfill
	\includegraphics[scale=1]{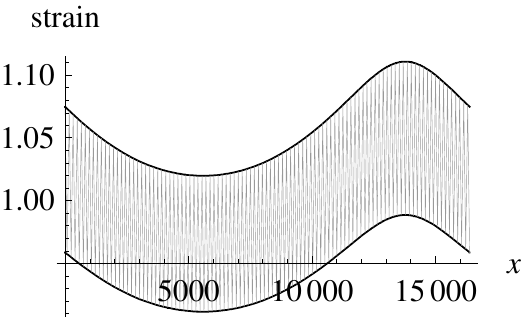}
\hfill\hfill
	\includegraphics[scale=1]{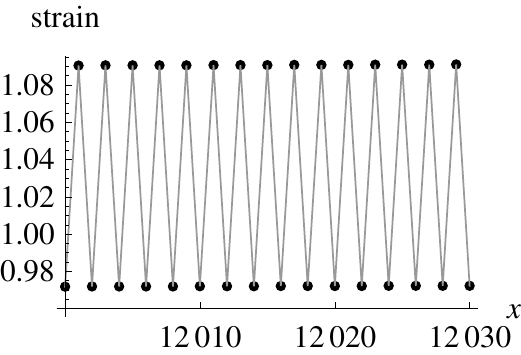}
\hfill $\mathstrut$
\end{center}
\caption{Strain $D u(x)$ of the solution of the 1D linear problem: the schematically shown complete solution (left) and the closeup of the micro-structure for 31 atoms (right).}
\label{fig:solution-linear}
\end{figure}
The (microscopic) strain $D u(x)$ for such problem is shown in Fig.\ \ref{fig:solution-linear}.

\begin{figure}
\begin{center}
\includegraphics[scale=1]{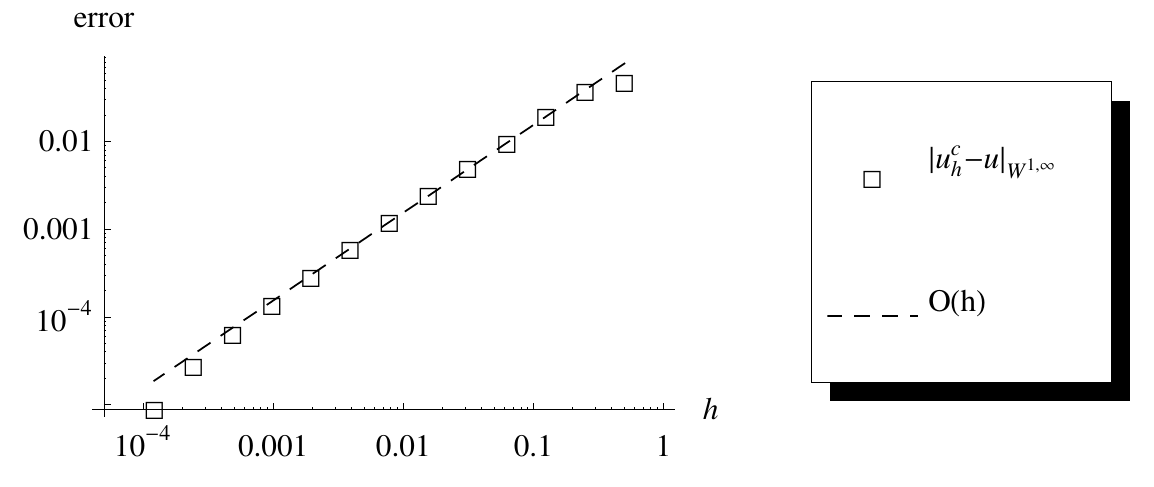}
\end{center}
\caption{Results for the 1D problem: error of the post-processed HQC solution $u_h^\c$.
The error behaves in accordance with Theorem \ref{th:apriori}.}
\label{fig:error1}
\end{figure}

Figure \ref{fig:error1} is aimed to illustrate that the estimate in Theorem \ref{th:apriori} is sharp.
Indeed, it can be seen that the corrected homogenized HQC solution $u_h^\c$ converges to the exact solution with the first order in $h$.

\subsection{2D}\label{sec:numeric:2d}

\begin{figure}
\begin{center}
\includegraphics[scale=1]{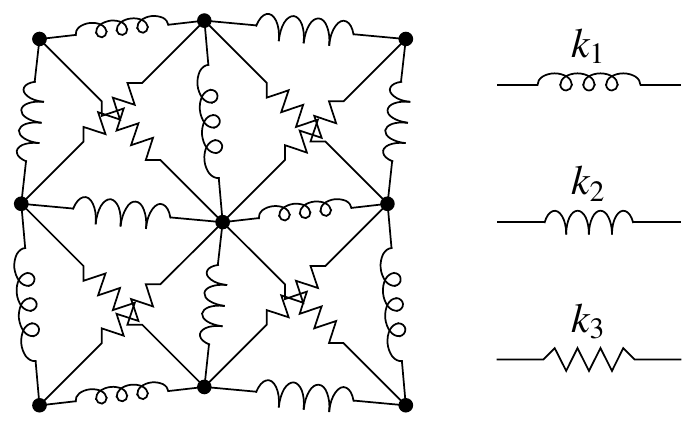}
\caption{Illustration of a 2D model problem with heterogeneous interaction.}
\label{fig:2d-springs}
\end{center}
\end{figure}

To illustrate the 2D discrete homogenization, we apply it to the following model problem.
The atomistic lattice is $\calL=(0,1]^2\cap\eps\bbZ^2$ with $\eps=1/N$, the atomistic energy is
\[
E(u) = \eps^2 \sum_{x\in\calL} \sum_{r\in\calR} \psi_{r,\frac{x}{\eps}} \smfrac12 \big(\smfrac{u(x+\eps r)-u(x)}{\eps}\big)^2,
\]
where the set of neighbors is defined by ${\mathcal R} = \left\{(1,0), (1,1), (0,1), (-1,1)\right\}$ (we omit the neighbors that can be obtained by reflection around $(0,0)$) and the interaction coefficients as
\[
\psi_{(1,1),y} = \psi_{(1,-1),y} = k_3
,\quad
\psi_{(1,0),y} = \psi_{(0,1),y} = \left\{
	\begin{array}{lcl}
	k_1 & & y_1 + y_2 \textnormal{ is even} \\
	k_2 & & y_1 + y_2 \textnormal{ is odd.}
	\end{array}
\right.
\]
Such material is illustrated in Fig.\ \ref{fig:2d-springs}.

This example was motivated by the study of Friesecke and Theil \cite{FrieseckeTheil2002}, where a similar model was considered.
Friesecke and Theil considered the model with springs similar to the one illustrated in Fig.\ \ref{fig:2d-springs}, which however was nonlinear due to nonzero equilibrium distances of the springs (so that the energy of the spring between masses $x_1$ and $x_2$ is proportional to $|x_1-x_2|^2-l_0^2$, where $l_0$ is the equilibrium distance).
They found that with certain values of parameters the lattice looses stability to non-Cauchy-Born disturbances and the lattice period doubles (thus the lattice ceases to be a Bravais lattice).

The results, given with no details of actual derivation, are the following:
The period of spatial oscillations in this case is $(2,2)$.
The function $\chi$ has the form $\chi = \chi(Y_j) = (-1)^{j_1+j_2} \frac{k_1-k_2}{4 (k_1+k_2)} I$
(here $I$ is the $2\times 2$ identity matrix).

We set the values of parameters $\epsilon=2^{-11}$, $N_1=N_2=2^{11}$, $k_1=1$, $k_2=2$, $k_3=0.25$, and the external force
\[
f(x) = 10 e^{-\cos(\pi x_1)^2-\cos(\pi x_2)^2} \left(\begin{array}{c} \sin(2\pi x_1) \\ \sin(2\pi x_2) \end{array}\right)\
-\bar{f},
\]
where $\bar{f}$ is determined so that the average of $f(x)$ is zero.
The total number of degrees of freedom of such system is approximately $8\cdot 10^{6}$.
The solution for such test case is shown in fig.\ \ref{fig:2d-solution} (the illustration is for $N_1=N_2=64$).

\begin{figure}
\begin{center}
\hfill
\includegraphics[scale=0.8]{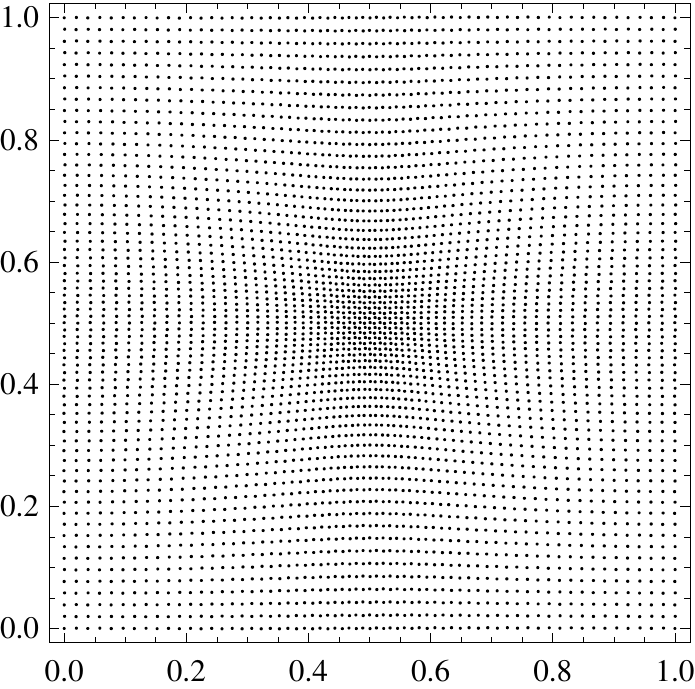}
\hfill
\includegraphics[scale=0.8]{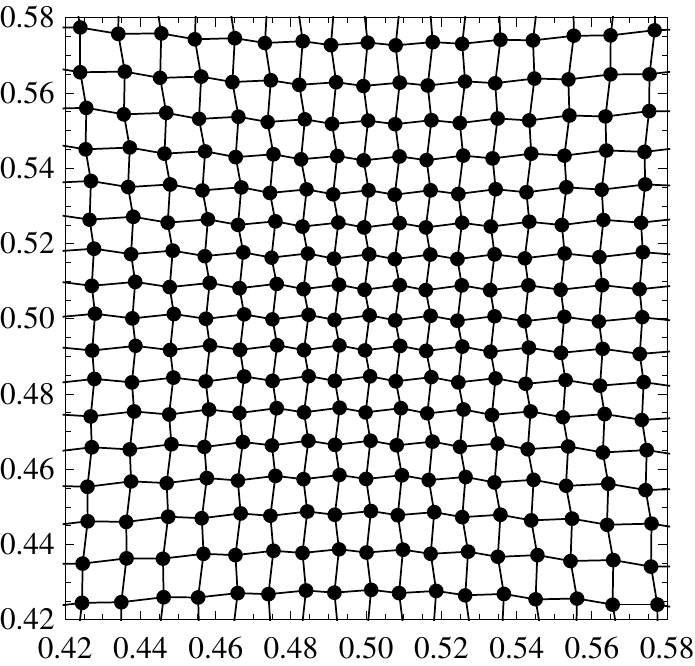}
\hfill
\end{center}
\caption{Atomic equilibrium configuration for $N_1=N_2=64$ for the 2D test case.
	Deformation of the whole material (left) and a close-up (right).}
\label{fig:2d-solution}
\end{figure}

The atomistic domain is triangulated using $t^2$ nodes and $K = 2 t^2$ triangles ($t=2,4,\ldots,2^{10}$).
In each triangle $S_k$ a sampling domain ${\mathcal I}_k$ is chosen, each sampling domain contains four atoms (see illustration in fig.\ \ref{fig:2d-triangulation}).
The number of degrees of freedom of the discretized problem is $2 t^2$.

\begin{figure}
\begin{center}
\includegraphics[scale=0.8]{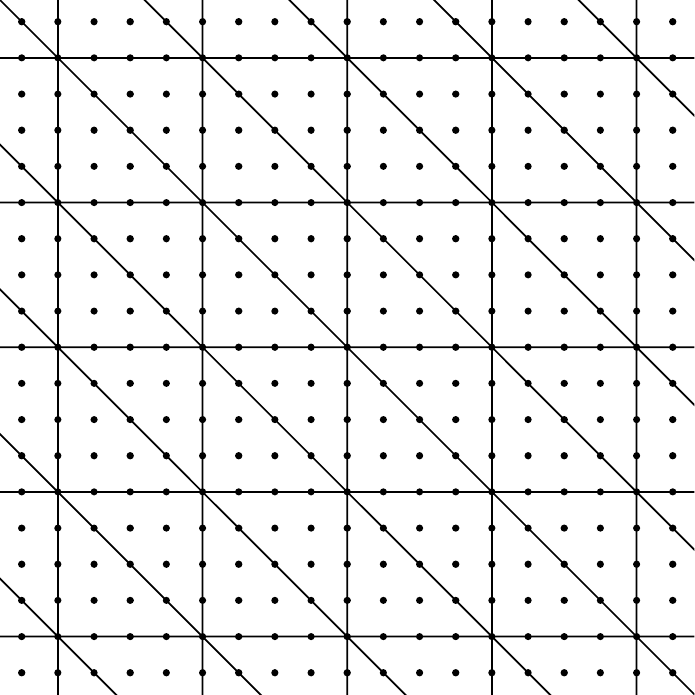}
\end{center}
\caption{Illustration of a 2D triangulation.
}
\label{fig:2d-triangulation}
\end{figure}

\begin{figure}
\begin{center}
\includegraphics[scale=1]{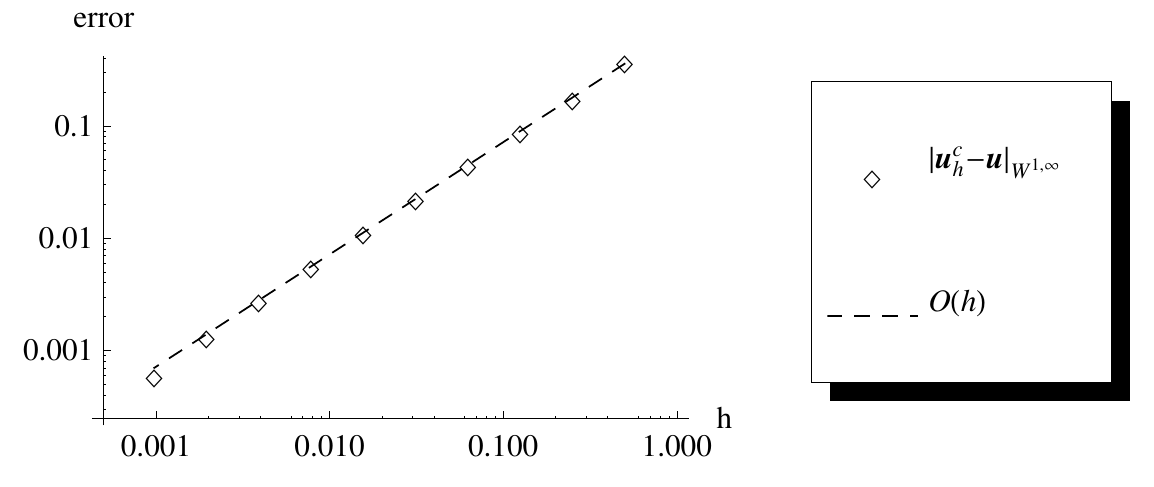}
\end{center}
\caption{Results for the 2D test case: error depending on the mesh size $h$.
The error behaves in accordance with the 1D analysis (Theorem \ref{th:apriori}).}
\label{fig:2d-testcase1-error}
\end{figure}

The error of the solution for different mesh size $h$ ($h=0.5,0.25,\ldots,2^{-10}$) is shown in Fig.\ \ref{fig:2d-testcase1-error}.
The results are essentially the same as in 1D case: the method convergences with the first order of mesh size in the $\calU^{1,\infty}$-norm.
\bigskip\\
\noindent{\bf Acknowledgements.} The work of A. Abdulle and A. V. Shapeev  was supported in part by the Swiss National Science Foundation under Grant 200021 134716/1.
The work of P. Lin was partially supported by the Leverhulme Trust Research Fellowship (No RF/9/RFG/2009/0507).
\appendix
\section{Implicit Function Theorem}\label{sec:appendix}
The following modification of the implicit function theorem (IFT) of Hildebrandt and Graves (1927), (\cf Zeidler 1986, p.~150) is used repeatedly in our analysis.
\begin{theorem}
\label{thm:IFT}
Let $X$, $Y$, and $Z$ be Banach spaces. Suppose that:
\begin{itemize}
\item[(i)] $F\in \C^{1,1}(U; Z)$ for a neighborhood $U\subset X\times Y$ of $(x_0, y_0)$ and $F(x_0, y_0)=0$.
\item[(ii)] $\del_y F(x_0,y_0)^{-1}$ exists and is bounded.
\end{itemize}
Then there exist $\rho_x>0$ and $\rho_y>0$, 
such that 
\begin{itemize}
\item[(a)] For each $x\in B_x(x_0,\rho_x)$ there exists a unique solution $y=y(x)\in B_y(y_0,\rho_y)$ of $F(x,y)=0$.
\item[(b)] 
$y=y(x)$ is Lipschitz with the constant
\[
|y|_{\C^{0,1}} \leq b_0 \|\del_x F\|_\C \leq b_0 |F|_{\C^{0,1}},
\]
where $b_0 := 2 \|\del_y F(x_0,y_0)^{-1}\|$.
Note that $\|\del_x F\|_\C \leq |F|_{\C^{0,1}}$ due to the fact that $F$ is continuously differentiable.
\item[(c)] The derivative $\del_x y$ exists and is Lipschitz with the constant
\[
|\del_x y|_{\C^{0,1}}
\leq
\Const\big(
	b_0 \|\del_x F\|_{\C},
	b_0 |\del_x F|_{\C^{0,1}},
	b_0 |\del_y F|_{\C^{0,1}}
\big).
\]
\end{itemize}
\end{theorem}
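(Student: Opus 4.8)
The plan is to recast the equation $F(x,y)=0$ as a fixed-point problem and invoke the Banach contraction principle, reading off the quantitative bounds in (b) and (c) from the contraction constant. Write $L := \del_y F(x_0,y_0)$, which by assumption (ii) is boundedly invertible with $\|L^{-1}\| = b_0/2$, and define for each fixed $x$ the map $T_x(y) := y - L^{-1} F(x,y)$. Since $L^{-1}$ is injective, $y$ solves $F(x,y)=0$ if and only if $T_x(y)=y$, so it suffices to produce a unique fixed point of $T_x$ in a suitable ball $B_y(y_0,\rho_y)$.

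First I would show that $T_x$ is a $\tfrac12$-contraction on $B_y(y_0,\rho_y)$ for $\rho_x,\rho_y$ small. Computing $\del_y T_x(y) = L^{-1}\big(\del_y F(x_0,y_0) - \del_y F(x,y)\big)$ and using that $\del_y F$ is Lipschitz (this is assumption (i), $F\in\C^{1,1}$), one gets $\|\del_y T_x(y)\| \le \tfrac{b_0}{2}\,|\del_y F|_{\C^{0,1}}\big(\|x-x_0\|+\|y-y_0\|\big)$, which is $\le\tfrac12$ once $\rho_x+\rho_y$ is taken small enough; the mean value inequality then gives the contraction. The self-mapping property follows by splitting $T_x(y)-y_0 = \big(T_x(y)-T_x(y_0)\big) + \big(T_x(y_0)-y_0\big)$, bounding the first term by $\tfrac12\|y-y_0\|\le\tfrac12\rho_y$ and the second, using $F(x_0,y_0)=0$, by $\|L^{-1}\|\,\|\del_x F\|_\C\,\|x-x_0\|$, and then shrinking $\rho_x$ so that the latter is $\le\tfrac12\rho_y$. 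Banach's theorem yields (a).

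For the Lipschitz bound (b), subtract the fixed-point identities at $x_1$ and $x_2$ and insert an intermediate term: $y(x_1)-y(x_2) = \big(T_{x_1}(y(x_1))-T_{x_1}(y(x_2))\big) + \big(T_{x_1}(y(x_2))-T_{x_2}(y(x_2))\big)$. The first bracket is $\le\tfrac12\|y(x_1)-y(x_2)\|$ by the contraction, while the second equals $-L^{-1}\big(F(x_1,y(x_2))-F(x_2,y(x_2))\big)$ and is bounded by $\|L^{-1}\|\,\|\del_x F\|_\C\,\|x_1-x_2\|$; absorbing the first term on the left gives $|y|_{\C^{0,1}} \le b_0\|\del_x F\|_\C$, and the final inequality in (b) is immediate from $\|\del_x F\|_\C\le|F|_{\C^{0,1}}$.

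The delicate part, and the one I expect to be the main obstacle, is (c). I would first show that $\del_y F(x,y(x))$ is invertible throughout the ball: since it differs from $L$ in norm by at most $|\del_y F|_{\C^{0,1}}(\rho_x+\rho_y)$, a Neumann-series argument gives invertibility with $\|[\del_y F(x,y(x))]^{-1}\|\le b_0$ after possibly shrinking the radii. Differentiability of $y$ with $\del_x y(x) = -[\del_y F(x,y(x))]^{-1}\del_x F(x,y(x))$ then follows from the first-order expansion of $F$ along $x\mapsto(x,y(x))$, the already-established Lipschitz continuity of $y$ controlling the remainder. Finally, the Lipschitz estimate on $\del_x y$ comes from this product representation: $x\mapsto\del_x F(x,y(x))$ is Lipschitz (composition of the Lipschitz map $\del_x F$ with $x\mapsto(x,y(x))$, its constant controlled via $b_0\|\del_x F\|_\C$), and $x\mapsto[\del_y F(x,y(x))]^{-1}$ is Lipschitz through the resolvent identity $\|A^{-1}-B^{-1}\|\le\|A^{-1}\|\,\|B^{-1}\|\,\|A-B\|$ together with the Lipschitz continuity of $\del_y F$ and of $y$. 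Combining these by the product rule gives a bound of the advertised form $\Const\big(b_0\|\del_x F\|_\C,\, b_0|\del_x F|_{\C^{0,1}},\, b_0|\del_y F|_{\C^{0,1}}\big)$; the real work here is the bookkeeping of constants so that exactly these three quantities appear, whereas parts (a)--(b) are routine contraction-mapping estimates.
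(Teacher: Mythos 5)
Your proposal is correct, and it is in fact more self-contained than what the paper does. The paper's proof is explicitly partial: it is titled ``Proof of estimates in (b) and (c)'' and begins by \emph{assuming} the existence and smoothness of $y(x)$ (deferring these to the classical Hildebrandt--Graves theorem cited from Zeidler); it then chooses the radii so that $\|(\del_y F(x,y))^{-1}\|\le b_0$ on the product ball, reads off (b) from the implicit-differentiation formula $\del_x y=-F_y^{-1}F_x$, and obtains (c) exactly as you do, via the identity $A^{-1}-B^{-1}=A^{-1}(B-A)B^{-1}$, the Lipschitz continuity of $\del_x F$ and $\del_y F$, and the already-established Lipschitz bound on $y$. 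You instead prove part (a) from scratch by the Banach contraction principle applied to $T_x(y)=y-L^{-1}F(x,y)$, and you derive the Lipschitz bound in (b) directly from the fixed-point identity (contraction absorbs the $\tfrac12\|y(x_1)-y(x_2)\|$ term, the $x$-increment contributes $\|L^{-1}\|\,\|\del_x F\|_\C\,\|x_1-x_2\|$), which yields exactly $|y|_{\C^{0,1}}\le b_0\|\del_x F\|_\C$ \emph{without} invoking differentiability of $y$ --- a logically cleaner ordering, since differentiability is only established afterwards in (c), where your Neumann-series argument also replaces the paper's appeal to continuity for the uniform bound $\|[\del_y F(x,y(x))]^{-1}\|\le b_0$. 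What the paper's route buys is brevity (the hard existence part is outsourced to the literature); what yours buys is a complete, quantitative proof in which the constants $b_0\|\del_x F\|_\C$, $b_0|\del_x F|_{\C^{0,1}}$, $b_0|\del_y F|_{\C^{0,1}}$ are tracked from the contraction setup onward, with the estimate bookkeeping in (c) coinciding with the paper's.
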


\begin{proof}[Proof of estimates in (b) and (c)]

We assume the existence and smoothness of $y(x)$ is proved.

Denote $b_0 := 2 \|(\del_y F(x_0,y_0))^{-1}\|$.
Since $\del_y F(x_0,y_0)$ is continuous in the neighborhood of $(x_0,y_0))$, we can assume that $\rho_x$ and $\rho_y$ are chosen small enough so that $\|(\del_y F(x,y))^{-1}\| \leq b_0$ in $B_x(x_0,\rho_x)\times B_y(y_0,\rho_y)$.

Denote $F_x := \delta_x F$, $F_y := \delta_y F$, $y_x := \delta_x y$.
We then have $F_x(x,y(x)) + F_y(x,y(x)) y_x(x) = 0$ for all $x\in B_x(x_0,\rho_x)$, or $y_x(x) = - F_y(x,y(x))^{-1} F_x(x,y(x))$.

To prove (b), estimate
\[
\|y_x(x)\|
\leq
\|F_y(x,y)^{-1}\| \|F_x(x,y)\| 
\leq b_0 \|F_x\|_\C.
\]

To show that $y_x(x)$ is Lipschitz, fix arbitrary $x_1, x_2 \in B_x(x_0,\rho_x)$, denote $y_1 = y(x_1)$ and $y_2 = y(x_2)$, and estimate
\begin{align*}
\|y_x(x_2) - y_x(x_1)\|
=~&
\|- F_y(x_2,y_2)^{-1} F_x(x_2,y_2)
+ F_y(x_1,y_1)^{-1} F_x(x_1,y_1)\|
\\
\leq~&
\|F_y(x_2,y_2)^{-1} [F_x(x_2,y_2) - F_x(x_1,y_1)]\|
\\~&+ \|[F_y(x_2,y_2)^{-1} - F_y(x_1,y_1)^{-1}] F_x(x_1,y_1)\|
\\
\leq~&
\|F_y(x_2,y_2)^{-1}\| \|F_x(x_2,y_2) - F_x(x_1,y_1)]\|
\\~&+ \|F_y(x_2,y_2)^{-1} [F_y(x_1,y_1) - F_y(x_2,y_2)] F_y(x_1,y_1)^{-1}\| \|F_x(x_1,y_1)\|
\\
\leq~&
b_0 |F_x|_{\C^{0,1}} (\|x_2-x_1\| + \|y_2 - y_1\|)
\\ ~&
+ b_0^2 |F_y|_{\C^{0,1}} (\|x_2-x_1\| + \|y_2 - y_1\|) \|F_x\|_\C.
\end{align*}
It remains to notice that $\|y_2 - y_1\| \leq b_0 \|F_x\|_\C \, \|x_2 - x_1\|$ due to part (b).
\end{proof}

\bibliographystyle{siam}
\bibliography{hqc.analysis}

\end{document}